\numberwithin{equation}{section}
\newtheorem{thm}{Theorem}
\newtheorem{corollary}[thm]{Corollary}
\newtheorem{lemma}[thm]{Lemma}
\newtheorem{proposition}[thm]{Proposition}
\newtheorem{conjecture}[thm]{Conjecture}
\newtheorem{qst}[thm]{Question}
\theoremstyle{remark}
\newtheorem{ex}[thm]{Example}
\newtheorem*{theorem*}{Theorem}
\newtheorem*{rmks}{Remarks}
\newtheorem{definition}{Definition}
\newcommand{\T}{\mathbb{T}}
\newcommand{\norm}[1]{\left\Vert#1\right\Vert}
\newcommand{\brkt}[1]{\left(#1\right)}
\newcommand{\abs}[1]{\left|#1\right|}
\newcommand{\set}[1]{\left\{#1\right\}}
\newcommand{\ip}[1]{\left\langle#1\right\rangle} 
	\newcommand{\N}{\mathbb{N}}
         \newcommand{\B}{\mathbb{B}}
	\newcommand{\C}{\mathbb{C}}
	\newcommand{\bS}{\mathbb{S}}
	\newcommand{\D}{\mathbb{D}}
	\newcommand{\cc}{\overline}
	\newcommand{\cd}{\mathfrak{D}} 
	\newcommand{\da}{\cd_\alpha}
	\newcommand{\norma}[1]{\norm{#1}_{\alpha}}
         \newcommand{\kerl}{\mathfrak{K}_{\lambda}}        
         \newcommand{\Pn}{\mathcal{P}_n} 
         \newcommand{\spn}{p_n^*}
	\newcommandx{\concern}[2][1=]{\todo[color = red!70!,#1]{Concern: #2}} 
	\newcommandx{\refq}[2][1=]{\todo[color = yellow!40!,#1,]{Reference: #2}} 
	\newcommandx{\wording}[2][1=]{\todo[color = violet!50!,#1,]{Wording: #2}} 
	\newcommandx{\alan}[2][1=]{\todo[color = green!25!,#1]{#2}}
	\newcommandx{\mere}[2][1=]{\todo[color = blue!25!,#1]{#2}}
        \newcommand{\McC}{\raise.5ex\hbox{c}}
\title[Optimal approximants in several variables]{Optimal approximants and orthogonal polynomials in several variables}
\author[Sargent]{Meredith Sargent}
\address{Department of Mathematics, University of Arkansas, Fayetteville, AR 72701, U.S.A.}
\email{sargent@uark.edu}
\author[Sola]{Alan A. Sola}
\address{Department of Mathematics, Stockholm University, 106 91 Stockholm, Sweden}
\email{sola@math.su.se}
\date{\today}
\subjclass[2010]{41A10 (primary); 46E22 (secondary).}
\keywords{Optimal approximants, orthogonal polynomials, weakly inner functions, zero sets}
\thanks{AS acknowledges support from Ivar Bendixons stipendiefond f\"or docenter.}
\begin{document}

\begin{abstract} 
	We discuss the notion of optimal polynomial approximants in multivariable reproducing kernel Hilbert spaces. In particular, we analyze difficulties that arise in the multivariable case which are not present in one variable, for example, a more complicated relationship between optimal approximants and orthogonal polynomials in weighted spaces. Weakly inner functions, whose optimal approximants are all constant, provide extreme cases where nontrivial orthogonal polynomials cannot be recovered from the optimal approximants. Concrete examples are presented to illustrate the general theory and are used to disprove certain natural conjectures regarding zeros of optimal approximants in several variables.  
 \end{abstract}
\maketitle


%

\section{Introduction}
	There are situations when understanding a space $\mathcal{H}$ consisting of analytic functions on some open subset of $\C^d$ requires the analysis of functions of the form $1/f$, where $f\in \mathcal{H}$. In general, of course, $1/f\notin \mathcal{H}$ and it becomes natural to look for 
	substitutes $p^*\in \mathcal{H}$ that approximate $1/f$ in some appropriate sense. One example of this type of investigation is the problem of determining cyclic vectors for the shift operator, or shift operators, in a Hilbert function space. In this context, $f\in\mathcal{H}$ is cyclic if the polynomial multiples of $f$ form a dense subset of $\mathcal{H}$. If the constant function $1$ is assumed to be cyclic, then it is frequently the case that $f \in\mathcal{H}$ is cyclic if a constant function can be approximated in the norm of $\mathcal{H}$ by polynomial multiples of $f$. This in turn amounts, at least intuitively, to being able to approximate $1/f$ by polynomials.

	The notion of an optimal approximant to $1/f$ appeared some time ago, both in the mathematical literature and previously in the engineering literature under the name {\it least squares polynomial inverse} (typically in the setting of the Hardy space $H^2$). Chui \cite{Chui80} attributes the notion to E.A. Robinson who apparently considered such approximation problems in the context of stationary stochastic processes \cite{Rob63}. In the 80s, Chui and others \cite{Izu85} obtained several important results for one-variable $H^2$-approximants, in particular examining the location of their zeros. Least squares polynomial inverses were also studied systematically in the several complex variables setting by Delsarte, Genin, and Kamp in the late 70's. They were led to examine least squares polynomial inverses to functions in $H^2$ of the bidisk by problems in filtering theory \cite{Shanks72}.

	In a series of recent papers by several authors, cyclic vectors in Dirichlet-type spaces have been studied via polynomial substitutes to $1/f$, appearing there under the name {\it optimal approximants}; at the time, the authors were not aware of the earlier works mentioned above. Optimal approximants were initially considered, and in some cases computed explicitly, in the one-variable setting \cite{JAM15}, and were then used in \cite{PJM15} to exhibit non-cyclic polynomials in two-variable Dirichlet spaces. Subsequently, optimal approximants themselves have been studied in several papers, with a particular emphasis on the location of their zeros \cite{JLMS16,JentZeros19} and their boundary behavior and universality \cite{Betalprep1,Betalprep2}. See \cite{SecoSurvey} for a survey of optimal approximants.

	This present paper on optimal approximants has two complementary goals. One the one hand, we would like to draw the attention of the function theory and operator theory communities to some results and problems discussed in the engineering literature that, in our opinion, have not received enough attention. In some cases, we are also able to give simplified arguments and examples. On the other hand, we contribute to the theory in several ways. First, we explain how to extend the notion of optimal approximants to a more general several-variables setting: in principle, this part is straight-forward, but there are some technical points and choices that we need to pay particular attention to. We then show that many the nice finer properties exhibited by one-variable optimal approximants and related functions are lost in higher dimensions. Despite this, in some cases, particularly when examining orthogonal polynomials, we find a structure connected to the one variable case. Finally, natural conjectures for several variable-optimal approximants are disproved by examining specific examples.

	Our paper is structured as follows. We begin, in Section \ref{sRKHS}, by setting down notation and giving a brief overview of the function spaces we are interested in. We then define optimal approximants in reproducing kernel Hilbert spaces defined in domains in $\C^d$ and discuss ways of computing such approximants for a given target function. We also mention applications to the analysis of cyclic vectors and two-dimensional filters. In Section \ref{sInner}, we discuss weakly inner functions, which are singled out by their property of having constant optimal approximants, and their connections with classical inner functions. An idea from earlier papers in the one-variable setting is adapted to give an	explicit construction of weakly inner functions. In Section \ref{sOG} we examine how optimal approximants relate to orthogonal polynomials in weighted spaces, and investigate under what circumstances  orthogonal polynomials can be recovered from optimal approximants. We also show that for a certain class of examples, orthogonal polynomials in two variables can be found from the known one-variable case. Section \ref{sShanks} is devoted to zero sets of optimal approximants and, in particular, to what is known in the engineering literature as the Shanks conjecture on regions where optimal approximants are zero-free. We review some of the existing results and then present several counterexamples to possible Shanks-type statements. Finally, Section \ref{sComp} features explicit computation and discussion of optimal approximants and orthogonal polynomials for functions of the form $f=1-a(z_1+z_2)$. Throughout the paper, we attempt to give references to relevant previous work: we hope these sources will inspire further work even though it is likely we have overlooked some important contributions.

\section{Optimal Approximants in reproducing kernel Hilbert spaces}\label{sRKHS} 
	\subsection{Reproducing kernel Hilbert spaces}\label{ss:RKHS}
		Let $\Omega \subset \C^d$ be an open set containing the origin. A {\it reproducing kernel Hilbert space} $\mathcal{H}(\Omega)$ is a Hilbert space consisting of holomorphic functions on $\Omega$  such that evaluation at a point of $\Omega$,
		\[e_\lambda \colon \mathcal{H} \to \C, \quad e_{\lambda}[f]=f(\lambda),\]
		furnishes a bounded linear functional. By standard Hilbert space theory, there exists an element $\kerl \in \mathcal{H}$ with the reproducing property
		\[f(\lambda)=\langle f , \kerl \rangle_{\mathcal{H}},\]
		where $\langle \cdot,\cdot \rangle_{\mathcal{H}}$ denotes the inner product in $\mathcal{H}(\Omega)$. We call $\kerl$ the {\it reproducing kernel} at $\lambda$. For any orthonormal basis
		$\{\phi_j\}_{j=0}^{\infty}$ for $\mathcal{H}$, the reproducing kernel admits the series representation
		\[\kerl(z)=\sum_{j=0}^{\infty}\overline{\phi}_j(\lambda)\phi_j(z).\]
		See \cite{AMBook} for a general introduction to Hilbert function spaces.

		In this paper, we shall typically take $\Omega$ to be the unit disk, the unit bidisk, or the unit ball in $\C^d$. We shall also impose the standing assumptions that $\C[z_1,\ldots,z_d]$, the ring of polynomials in $d$ complex variables, forms a dense subspace of $\mathcal{H}(\Omega)$ and that 
		the operators of multiplication by the coordinate functions,
		\[S_j\colon \mathcal{H}\to \mathcal{H}, \quad S_j[f](z)=z_j\cdot f(z), \quad j=1,\ldots, d,\]
		act boundedly on $\mathcal{H}$.

		Throughout, we will consider the following spaces of holomorphic functions to illustrate the general theory.

		\subsection*{Dirichlet-type spaces in the disk and the bidisk}
			Let $\alpha \in (-\infty,\infty)$ be fixed. The {\it Dirichlet-type space} $D_{\alpha}$ consists of holomorphic functions $f=\sum_{k=0}^\infty a_k z^k$ on the unit disk $\D=\{z\in\C\colon|z|<1\}$ satisfying the norm boundedness condition
				\begin{equation}\label{Dnorm1}
					\norm{f}_{D_\alpha}^2 = \sum_{k=0}^\infty (k+1)^\alpha \abs{a_k}^2<\infty.		
			 	\end{equation} 
			When $\alpha=0$, we recover the standard Hardy space $H^2$. The choice $\alpha=-1$ corresponds to the Bergman space $A^2$ in the unit disk, while $D=D_1$ can be identified with the classical Dirichlet space consisting of functions having $\int_{\D}|f'(z)|^2dA(z)<\infty$, where $dA$ is normalized area measure on the disk. The literature on these spaces is vast but basic introductions can be found in \cite{DurBook, HedBook, ElFBook}.

			For $\alpha\in \{-1,0,1\}$, explicit expressions for the reproducing kernels are known. In $H^2$ and $A^2$, we have the usual {\it Szeg\H{o}} and {\it Bergman kernels}
			\begin{equation}
			\kerl^{H^2}(z)=\frac{1}{1-\overline{\lambda}z} \quad \textrm{and}\quad \kerl^{A^2}(z)=\frac{1}{(1-\overline{\lambda}z)^2}.
			\label{harberkernel}
			\end{equation}
			For non-integer values of $\alpha$, closed form expressions for the reproducing kernels $\kerl$ in terms of rational functions are in general not available.

			We can define Dirichlet-type spaces $\cd_{\alpha_1,\alpha_2}$ on the bidisk 
			\[\D^2=\set{(z_1,z_2)\in\C^2 \colon \abs{z_1}<1, \abs{z_2}<1}\]
			as tensor products of one-variable Dirichlet-type spaces; that is, we can take
			\[\cd_{\alpha_1,\alpha_2}=D_{\alpha_1}\otimes D_{\alpha_2}.\]
			See \cite{AMBook} for more on this perspective.
			In concrete terms, $\cd_{\alpha_1,\alpha_2}$ consists of holomorphic functions
			\[f(z_1,z_2)=\sum_{j=0}^\infty \sum_{k=0}^\infty a_{j,k}z_1^jz_2^k\] 
			on the bidisk whose Taylor coefficients satisfy
				\begin{equation}\label{Dnorm2}
					\norm{f}_{\alpha_1,\alpha_2}^2 = \sum_{j=0}^\infty \sum_{k=0}^\infty (j+1)^{\alpha_1}(k+1)^{\alpha_2}\abs{a_{j,k}}^2.
				\end{equation}
			We write $\da$ when $\alpha=\alpha_1=\alpha_2$, and the norm in this case will be denoted by $\norma{f}$. By the general theory of reproducing kernel spaces \cite{AMBook}, the kernel of $\cd_{\alpha_1,\alpha_2}$ at $\lambda=(\lambda_1,\lambda_2) \in \D^2$ is a product of one-variable kernels,
			\begin{equation}
			\mathfrak{K}^{\cd_{\alpha_1,\alpha_2}}_{\lambda_1,\lambda_2}(z_1,z_2)=\mathfrak{K}^{D_{\alpha_1}}_{\lambda_1}(z_1)\cdot \mathfrak{K}^{D_{\alpha_2}}_{\lambda_2}(z_2),\quad (z_1,z_2)\in \D^2.
			\label{kernelasproduct}
			\end{equation}

			Similar statements are valid in $d$-dimensional polydisks.

		\subsection*{The Drury-Arveson space}
			Let 
			\[\B^d=\{z\in \C^d\colon \|z\|^2<1\}\]
			denote the unit ball in $\C^d$ and let $\bS^d=\partial \B^d$ be its boundary, the unit sphere, and let
			\[\langle z,w\rangle=z_1\overline{w}_1+\cdots +z_d\overline{w}_d\]
			denote the standard Euclidean inner product on $\C^d$.

			The {\it Drury-Arveson space} on $\B^d$ is the reproducing kernel Hilbert space of holomorphic function of the ball determined by the kernel
			\[\kerl^{H^2_{d}}(z)=\frac{1}{1-\langle z, \lambda\rangle}, \quad z=(z_1,\ldots,z_d) \in \B^d.\]
			Basic structural properties of the Drury-Arveson space are discussed in, for instance, \cite{ShaBook,RicSun16}; for instance, the Drury-Arveson norm is invariant under unitaries. For our purposes, it will be useful to note that the norm in $H^2_d$ can be expressed in terms of the coefficients of $f=\sum_ka_kz^k$ using standard multi-index notation:
			\[\|f\|_{H^2_d}^2=\sum_{n=0}^{\infty}\sum_{|k|=n}\frac{k!}{|k|!}|a_k|^2.\]

			In particular, in two variables we have
			\[\kerl^{H^2_{2}}(z)=\frac{1}{1-\overline{\lambda}_1z_1-\overline{\lambda_2}z_2} \quad \textrm{and} \quad
			\|f\|_{H^2_2}^2=\sum_{j=0}^{\infty}\sum_{k=0}^{\infty}\frac{j!k!}{(j+k)!}|a_{j,k}|^2.\]

	\subsection{Optimal polynomial approximants}\label{ssOA_def}
		Set $\chi_0=1$ and let 
		\[\chi_1,\chi_2, \chi_3, \ldots\] 
		be an ordering of complex monomials $z^k=z_1^{k_1}\cdots z_d^{k_d}$ according to some chosen order. In several variables there are several natural ways to index monomials: the general setup below is independent of this choice, but when we later turn to examples we typically use the degree lexicographic order \cite{GWsiam07} where monomials are ordered by increasing total degree, and ties are broken lexicographically. In two variables, this amounts to
		\[\chi_1=z_1, \quad \chi_2=z_2,  \quad \chi_3=z_1^2, \quad \chi_4=z_1z_2, \quad \chi_5=z_2^2, \quad \chi_6=z_1^3,\]
		and so on. We find it illuminating to display the monomials in the tree diagram in Figure \ref{fig:triangle}.
			{\renewcommand{\arraystretch}{1.25}
			\begin{figure}[h!]
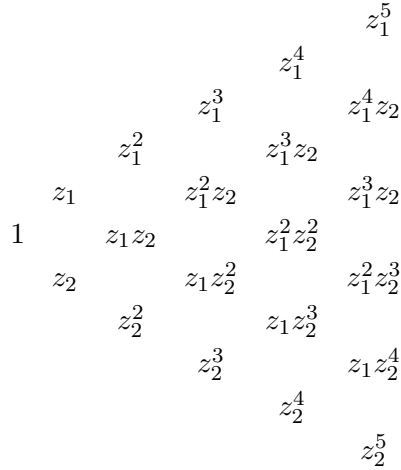

				\centering
					\begin{equation*}
						\begin{matrix}
							~		&		~		&		~			&		~				&		~ 				&		~z_1^5			\\
							~		&		~		&		~			&		~				&		z_1^4			&		~	 					\\
							~		&		~		&		~			&	z_1^3			&		~ 				&		z_1^4z_2		\\
							~		&		~		&	z_1^2		&		~				&		z_1^3z_2 	&		~						\\	
							~		&	 z_1 	&		~			& z_1^2z_2 	&		~					&		z_1^3z_2		\\
							1		&		~		&	z_1z_2 	&		~				&		z_1^2z_2^2&		~						\\
							~		&	 z_2 	&		~			& z_1z_2^2 	&		~					&		z_1^2z_2^3	\\
							~		&		~		&	z_2^2		&		~				&		z_1z_2^3	&		~						\\	
							~		&		~		&		~			&	z_2^3			&		~ 				&		z_1z_2^4		\\
							~		&		~		&		~			&		~				&		z_2^4			&		~ 					\\
							~		&		~		&		~			&		~				&		~					&		z_2^5				\\
						\end{matrix}
					\end{equation*}
				\caption{\small We shall call the center row of this diagram the \emph{main diagonal.} Degree lexicographic order reads ``down'' each column (where the total degree of the monomials in each column is fixed), moving to the right.\label{fig:triangle}}
			\end{figure}}


		With an ordering of monomials in place, we set
		\begin{equation}
		\Pn=\mathrm{span}\{\chi_j\;\colon j=0,\ldots, n\},\quad n=0,1,2,\dots.
		\label{Pndef}
		\end{equation}
		Since $\chi_0=1$ we have $\mathcal{P}_0=\mathrm{span}\{1\}$, the constant polynomials.
		Note that \[\mathcal{P}_0\subset \mathcal{P}_1\subset \cdots\subset \Pn\subset \cdots \] is
		 an exhaustion of $\C[z_1,\ldots, z_d]$ (viewed as a vector space) by finite-dimensional subspaces, that is,  $\bigcup_n\Pn=\C[z_1,\ldots, z_d]$. If $d=1$, we typically order monomial by degree, in which case
		\[\Pn=\{p \in \C[z]\colon \deg(p)\leq n\}\] for $n=0,1,2,\ldots.$
		\begin{definition}
		Let $f\in \mathcal{H}(\Omega)$ be given. The {\it $n$th-order optimal polynomial approximant to $1/f$} with respect to $\Pn$ is defined as
		\[\spn(z)=\mathrm{Proj}_{f\cdot \Pn}[1](z),\]
		where $\mathrm{Proj}_{f\cdot \Pn}\colon\mathcal{H}\to f\cdot \Pn$ denotes the orthogonal projection onto the subspace $f\cdot \Pn$.

		In other words, $\spn$ is the unique polynomial that minimizes $\|p\cdot f-1\|_{\mathcal{H}}$ among all $p\in \Pn$.
		\end{definition}
		The existence and uniqueness of $\spn$, relative to a particular choice of $\{\chi_j\}$, follows immediately from Hilbert space theory: our assumption that multiplication by each variable acts boundedly on $\mathcal{H}$ implies that $f\cdot \Pn$ is a closed subspace of $\mathcal{H}$ for each $n$.	Note that we obtain different sequences of optimal approximants depending on the contents of the $\Pn$.

		The following notion of distance will also feature.
		\begin{definition}
		For a given $f\in \mathcal{H}$ and $\Pn$ as above, the {\it $n$th order optimal norm} is defined as
		\[\nu_n(f, \mathcal{H})=\|p_n^*\cdot f-1\|_{\mathcal{H}}.\]

		\end{definition}
		Note that, since the subspaces $\Pn$ are nested, $\nu_n(f)$ is non-increasing as a function of $n$.

	\subsection{Optimal approximants via Grammians}\label{ssGrammian}
		The following is a straightforward reinterpretation of previous methods of computing optimal approximants \cite{JAM15,FMS14} to our present setting.
		\begin{proposition}\label{matrixprop}
			Let $f\in \mathcal{H}\setminus \{0\}$. Then the coefficients of the $n$-order optimal approximant $\spn=\sum_{j=0}^nc_j^*\chi_j$ are given by solution to the linear system
			\[M\vec{c}\,^\ast=\vec{b},\]
			where $M$ is an $(n+1)\times (n+1)$ Grammian matrix with entries given by
			\[M_{ij}=\langle\chi_jf, \chi_i f\rangle\]
			and 
			\[\vec{b}=\left(\begin{array}{c}\langle 1, \chi_0f\rangle \\\vdots\\ \langle1,\chi_n f \rangle\end{array}\right).\] 
		\end{proposition}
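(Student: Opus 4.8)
The plan is to obtain the system directly from the projection characterization of $\spn$ rather than from any computation with an explicit basis. By definition $\spn\cdot f=\mathrm{Proj}_{f\cdot\Pn}[1]$, and the defining property of the orthogonal projection onto a closed subspace (here $f\cdot\Pn$ is closed, by the standing assumption that multiplication by each coordinate acts boundedly on $\mathcal{H}$) is that the residual $1-\spn\cdot f$ is orthogonal to $f\cdot\Pn$. Since $\{\chi_i f\}_{i=0}^n$ spans $f\cdot\Pn$, this single orthogonality relation is equivalent to the $n+1$ scalar equations $\langle 1-\spn f,\chi_i f\rangle=0$ for $i=0,\ldots,n$.

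The next step is pure bookkeeping. Writing $\spn=\sum_{j=0}^n c_j^*\chi_j$ and using linearity of the inner product in its first slot, each of these equations becomes $\langle 1,\chi_i f\rangle=\sum_{j=0}^n c_j^*\langle\chi_j f,\chi_i f\rangle$. The left-hand side is the entry $b_i$ of $\vec{b}$, while the right-hand side is precisely the $i$th entry of $M\vec{c}^{\,*}$ with $M_{ij}=\langle\chi_j f,\chi_i f\rangle$. Note that the monomial attached to the unknown index $j$ sits in the \emph{second} argument of $M_{ij}$, which is the correct placement given that the inner product is conjugate-linear there. Assembling the $n+1$ equations gives $M\vec{c}^{\,*}=\vec{b}$.

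Finally, to justify that the coefficients of $\spn$ are genuinely \emph{recovered} as the solution of this system, I would verify that $M$ is invertible. The matrix $M$ is the Gram matrix of the family $\{\chi_i f\}_{i=0}^n$, so it suffices to show these vectors are linearly independent in $\mathcal{H}$. If $\sum_i c_i\chi_i f=0$, then $\bigl(\sum_i c_i\chi_i\bigr)f\equiv 0$ on $\Omega$; since $\Omega$ is a domain, the holomorphic functions on it form an integral domain, and $f\not\equiv 0$ forces $\sum_i c_i\chi_i\equiv 0$, whence all $c_i=0$ by linear independence of the monomials. Thus $M$ is a positive-definite Gramian, $\vec{c}^{\,*}=M^{-1}\vec{b}$ is the unique solution, and by the already-established uniqueness of $\spn$ these are exactly its coefficients.

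There is essentially no hard analytic core here; the work lies entirely in the justifications that frame the linear algebra. The step requiring the most care is the orthogonality characterization combined with the index/conjugation convention, since a careless treatment can silently transpose or conjugate $M$. The invertibility argument is the only place where the hypothesis $f\neq 0$ and the holomorphic setting are actually used, and it is what upgrades the statement ``the coefficients satisfy the system'' to ``the coefficients are given by the solution of the system.''
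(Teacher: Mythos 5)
Your proof is correct and follows essentially the same route as the paper's: both characterize $\spn$ by the orthogonality of the residual $1-\spn f$ to $f\cdot\Pn$, test against the spanning set $\{\chi_i f\}_{i=0}^n$, and expand by linearity to read off the system $M\vec{c}^{\,*}=\vec{b}$. Your additional verification that $M$ is invertible (via linear independence of $\{\chi_i f\}$, where one can note that a polynomial vanishing on an open set is identically zero, so connectedness of $\Omega$ is not even needed) is a worthwhile completion that the paper leaves implicit in its appeal to the existence and uniqueness of $\spn$.
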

		The proof is analogous to the one-variable case, see \cite{FMS14}; we sketch it for the reader's convenience.
		\begin{proof}
			By the definition of $\spn$, we have $(\spn f-1)\perp \Pn$. Thus $\langle \spn f-1, f\chi_i\rangle=\langle1, f\chi_i \rangle$. This in turn can be rewritten as
			\[\left\langle\sum_jc^*_j \chi_j f,f\chi_i \right\rangle=\langle 1, f\chi_i\rangle,\]
			and, using linearity, we obtain the desired linear system.
		\end{proof}
		If $\langle 1, \chi_if\rangle=0$ for all $i\geq 1$, as is the case for most spaces we are interested in, then $\vec{b}=\overline{f(0)}\delta_{i,0}$ in the above proposition.

		It is typically not straightforward to find $\{p_n^*\}$ in closed form for a given $f$ using the representation in Proposition \ref{matrixprop}. More sophisticated approaches to computation and fine analysis of optimal approximants are discussed in \cite{DGKalg79, Betalprep2}, for example, but are not needed for what we want to achieve in this paper.

		Building on one-variable work in \cite{JAM15,FMS14}, we can obtain optimal approximants for some simple polynomial targets.
		\begin{ex}\label{FMSexpoly}
			Consider a sequence $\{\omega(k)\}_{k=0}^{\infty}$ of strictly positive weights satisfying $\lim_{k\to \infty}\omega(k+1)/\omega(k)=1$ and let $\mathcal{H}_{\omega}$ be the Hilbert function space consisting of analytic $f\colon \D\to \C$ whose power series $f=\sum_{k=0}^{\infty}a_kz^k$ satisfy
			\begin{equation}
			\|f\|_{H_{\omega}}=\sum_{k=0}^{\infty}\omega(k)|a_k|^2<\infty.
			\end{equation}
			Let us further assume that $\{z^k/\|z^k\|_{\omega}\}$ is an orthonormal basis for $\mathcal{H}_{\omega}$. In this setting, Fricain, Mashreghi, and Seco \cite[Theorem 3.9]{FMS14} have found an explicit expression for the $\mathcal{H}_{\omega}$-optimal approximants to $1/f$ for the function $f=1-z$. (See \cite{JAM15} for the case of Dirichlet-type spaces in the unit disk.)  Indeed, we have
			\[\spn(z)=\sum_{k=0}^{n}\left(1-\frac{\sum_{j=0}^k\omega(j)^{-1}}{\sum_{j=0}^{n+1}\omega(j)^{-1}}\right)z^k.\]
		\end{ex}

		In our discussion of higher-dimensional analogs of Example \ref{FMSexpoly}, we find it convenient to consider diagonal subspaces
		\[\mathcal{J}_{n}=\mathrm{span}\{(z_1\cdots z_d)^k\colon k=0,1,\ldots, n\} \quad \textrm{and}\quad 
		\mathcal{J}=\mathrm{span}\{(z_1\cdots z_d)^k\colon k\in \N\}.\]
		Also, define $\Pn$ using degree lexicographic order and let $\oslash n$ denote the lowest index $m$ for which the exponent $(z_1z_2)^n$ belongs to $\mathcal{P}_m$. (Explicitly, $\oslash 1=4$, $\oslash 2=12$, and so on, and note that $\mathcal{J}_n \subsetneq \mathcal{P}_{\oslash n}$.)

		\begin{ex}\label{diriexpoly}
			We first consider optimal approximants to $1/(1-z_1z_2)$ in the Dirichlet-type spaces $\mathfrak{D}_{\alpha_1,\alpha_2}$ in the bidisk.

			In \cite{PJM15,JAM19}, it was observed that there is an isometric isomorphism between $\mathcal{J}_n$, 
			viewed as a closed subspace of $\mathfrak{D}_{\alpha_1,\alpha_2}$, and the set $J_{n}=\mathrm{span}\{z^k\colon k=0,1, \ldots, n\}$ viewed as a closed subspace of $D_{\alpha_1+\alpha_2}$, 
			a Dirichlet-type space in the unit disk. Under this isomorphism, $f=1-z_1z_2$ is mapped to $F=1-z$. Next, we note that $D_{\alpha_1+\alpha_2}$ can be viewed as $\mathcal{H}_{\omega}$ with weight sequence $\omega(k)=(k+1)^{\alpha_1+\alpha_2}$. Finally, by orthogonality, the $n$th-order optimal approximants $\spn=\mathrm{Proj}_{f\cdot \Pn}[1]$ are polynomials in $z_1z_2$ only. 

			Thus the optimal approximants $\spn$ change from $\mathcal{J}_m$ to $\mathcal{J}_{m+1}$, and stay the same for all $\Pn$ containing $\mathcal{J}_m$ and being strictly contained in $\mathcal{P}_{\oslash (m+1)}$.

			Now, using Example \ref{FMSexpoly}, we  find that  
			\begin{equation}
			p^*_{\oslash n}=\mathrm{Proj}_{F\cdot P_n}[1](z_1z_2)=\sum_{k=0}^{n}\left(1-\frac{\sum_{j=0}^k(j+1)^{-(\alpha_1+\alpha_2)}}{\sum_{j=0}^{n+1}(j+1)^{-(\alpha_1+\alpha_2)}}\right)(z_1z_2)^k
			\end{equation}
			are the optimal approximants to $1/(1-z_1z_2)$ for $\oslash n\leq k< \oslash (n+1)$.
		\end{ex}

		\begin{ex}\label{DAexpoly}
		A similar analysis applies in the case of the $d$-dimensional Drury-Arveson space (or Dirichlet-type spaces in the unit ball, cf. \cite{S15}). 

		By the arithmetic-geometric means inequality, the mapping 
		\[\iota: (z_1,\ldots, z_d)\mapsto d^{d/2}\prod_{j=1}^d z_j\]
		sends the unit ball $\B^d$ to the unit disk $\D$.
		Next, we note that
		\[\|(z_1\cdots z_d)^k\|^{2}_{H^2_d}=\frac{(k!)^d}{(dk)!}.\]
		Together, these observations establish an isometric isomorphism between $\mathcal{J}_k$ viewed as a closed subspace of $H^2_d$ and the set $J_n$ sitting inside
		the space $\mathcal{H}_{\omega}$ of functions on the disk associated with the weight sequence
		\[\omega_d(k)=d^{dk}\frac{(k!)^d}{(dk)}.\]
		Using this choice of weight sequence in the formula in Example \ref{FMSexpoly}, we obtain the polynomials 
		\[p^*_{\bf{\oslash} n}(z_1,\ldots, z_d)=\sum_{k=0}^{n}\left(1-\frac{\sum_{j=0}^k\omega_d(j)^{-1}}{\sum_{j=0}^{n+1}\omega_d(j)^{-1}}\right)(d^{d/2}z_1\cdots z_d)^k,\]
		and these are the optimal approximants to $1/(1-d^{d/2}\prod_{k=1}^dz_k)$ in $H^2_d$ for $\mathbf{\oslash} n\leq k<\mathbf{\oslash} (n+1)$. Here, $\oslash$ is the $d$-dimensional analog of $\oslash$ in two variables.

		For instance, in the two variable case, 
		\[p^*_{\bf{\oslash} 1}(z_1,z_2)=\frac{1}{3}, \qquad 
				p^*_{\bf{\oslash} 2}(z_1,z_2)=\frac{7}{15}+\frac{2}{15}z_1z_2,\]
				\[p^*_{\bf{\oslash} 3}(z_1,z_2)=\frac{19}{35} + \frac{22}{35} z_{1} z_{2} + \frac{4}{7} z_{1}^{2} z_{2}^{2},\]
			and so on.
		\end{ex}
		
	\subsection{Applications of optimal approximants: cyclic vectors}\label{ssCyclicity}
		Recall that a vector $f\in \mathcal{H}$ is said to be {\it cyclic} for the shift operators $S_1, \ldots, S_d$ if the invariant subspace
		\[[f]_{\mathcal{H}}=\mathrm{clos}_{\mathcal{H}}\mathrm{span}\left\{S_1^{k_1}\cdots S_d^{k_d}f\colon k\in \N^d\right\}\]	
		is dense in $\mathcal{H}$. Since the polynomials were assumed dense in all the Hilbert spaces we are considering, the function $f=1$ is a cyclic vector.	As is explained in 
		\cite{BScyc84,JAM15}, this is equivalent to having
		\[\nu_n(f)\to 0 \quad \textrm{as}\quad n\to \infty.\]
		One of the original applications of optimal approximants in \cite{JAM15,PJM15} was to use the rate at which $\nu_n(f)$ decays to zero to not only distinguish between cyclic and non-cyclic vectors, but also to give finer distinctions between ``how cyclic" different cyclic functions are.

		\begin{ex}\label{bidiskcyclicex}
			Returning to the one-variable spaces $H_{\omega}$ considered by Fricain, Mashreghi, and Seco, and the function $f=1-z$, we note that, by \cite[Corollary 3.10]{FMS14},
			\begin{equation}
			\nu_n(1-z, \mathcal{H}_{\omega})=\left(\frac{1}{\sum_{k=0}^{\infty}\frac{1}{\omega(k)}}\right)^{1/2}.
			\label{FMSdistance}
			\end{equation}

			Arguing as in Example \ref{FMSexpoly}, we can now use \eqref{FMSdistance} to extract information about cyclicity of $f=1-z_1z_2$ in the Dirichlet spaces $\mathfrak{D}_{\alpha_1,\alpha_2}$ in the bidisk, and about $1-d^{d/2}\prod_{k=1}^dz_k$ in the Drury-Arveson space. 

			Since the weight sequence in $\mathfrak{D}_{\alpha_1, \alpha_2}$ is $\omega_1(k)\omega_2(l)=(k+1)^{\alpha_1}(l+1)^{\alpha_2}$, we have $\omega(k)=\omega_1(k)\omega_2(k)=(k+1)^{\alpha_1+\alpha_2}$, and thus
				\[\nu_{\oslash n}(1-z_1z_2, \mathfrak{D}_{\alpha_1,\alpha_2})=\nu_n(1-z, D_{\alpha_1+\alpha_2})=\left(\frac{1}{\sum_{k=0}^{n+1}(k+1)^{-(\alpha_1+\alpha_2)}}\right)^{1/2}.\]
			The sum in the right-hand side converges as $n$ tends to infinity precisely when $\alpha_1+\alpha_2\leq 1$. Thus, as was shown in \cite{JAM19}, $f=1-z_1z_2$ is cyclic in $\mathfrak{D}_{\alpha_1,\alpha_2}$ if and only if $\alpha_1+\alpha_2\leq 1$. When $\alpha_1+\alpha_2>1$, we obtain
				\[\nu^2(1-z_1z_2, \mathfrak{D}_{\alpha_1,\alpha_2})=\frac{1}{\sum_{k=0}^{\infty}(k+1)^{-(\alpha_1+\alpha_2)}}=\frac{1}{\zeta(\alpha_1+\alpha_2)}.\]
			In particular, for the Dirichlet space $\mathfrak{D}=\mathfrak{D}_{1,1}$, $\nu(1-z_1z_2, \mathfrak{D})=\frac{\sqrt{6}}{\pi}$.
		\end{ex}

		Cyclic polynomials for $\mathfrak{D}_{\alpha_1,\alpha_2}$ have been completely characterized, see \cite{NGN70,TAMS16,JAM19}, and the cyclicity/non-cyclicity part of Example \ref{bidiskcyclicex} follows immediately from that characterization. What optimal approximants allow us to do, is to measure how far from cyclic $1-z_1z_2$ is for different pairs of 
		$(\alpha_1,\alpha_2)$.

		\begin{ex}\label{ex:DAdiag}
			We turn to the Drury-Arveson space $H^2_d$ and the functions $f=1-d^{d/2}z_1\cdots z_d$. As in Example \ref{FMSexpoly},
			we set $\omega_d(k)=d^{dk}\frac{(k!)^d}{(dk)!}$ and obtain
			\[\nu_{\oslash n}^2\left(1-d^{d/2}\prod_{k=1}^dz_k, H^2_d\right)=\frac{1}{\sum_{k=0}^{n+1}\omega_d(k)^{-1}}\]
			as well as
			\[\nu^2\left(1-d^{d/2}\prod_{k=1}^dz_k, H^2_d\right)=\frac{1}{\sum_{k=0}^{\infty}\omega_d(k)^{-1}}.\]
			A short computation involving Stirling's formula shows that
			\[\omega_d(k)\asymp k^{\frac{d-1}{2}},\quad \textrm{as}\quad k\to \infty.\]
			In particular, $f=1-d^{d/2}\prod_{k=1}^dz_k$ is cyclic in $H^2_d$ if and only if $d\leq 3$. This recovers an earlier result of Richter and Sundberg \cite{RicSun12} who used the same embedding argument above, which also features in Arveson's work \cite{arvy}.
		\end{ex}
		
	\subsection{Applications of optimal approximants: two-dimensional recursive filters}\label{filtersub}
		Another, older, application of optimal approximants relates to two-dimensional recursive filtering theory and was discussed by Shanks, Treitel, and Justice \cite{Shanks72}. We give a brief description of their work here, and note that their work in turn was motivated by engineering applications including the study of seismic records and photographic data \cite{Shanks72}.

		Given a data array $D=(d_{j,k})_{j,k=1}^n$, we form a two-variable polynomial $D(z_1,z_2)=\sum_{j=1}^n\sum_{k=1}^nd_{j,k}z_1^{j-1}z_2^{k-1}$. Then, in the notation of \cite{Shanks72}, a recursive filter algorithm is obtained as follows. We set
		\begin{equation}
		R(z_1,z_2)=F(z_1,z_2)D(z_1,z_2)
		\label{recfiltereq}
		\end{equation}
		where $F(z_1,z_2)=A(z_1,z_2)/B(z_1,z_2)$ is a rational function of two variables. After clearing fractions, \eqref{recfiltereq} translates into
		\[B(z_1,z_2)R(z_1,z_2)=A(z_1,z_2)D(z_1,z_2).\]
		Assuming that the constant term $b_{1,1}$ in $B(z_1,z_2)=\sum_{j=1}^{M_B}\sum_{k=1}^{N_B}b_{j,k}z_1^{j-1}z_2^{k-1}$ is non-zero and dividing through, we obtain
		\begin{multline*}
		R(z_1,z_2)=\left(\sum_{j=1}^{N_A}\sum_{k=1}^{N_A}\frac{a_{j,k}}{b_{1,1}}z_1^{j-1}z_2^{k-1}\right)D(z_1,z_2)\\
		-\left(\sum_{\parbox{1.6cm}{\centering{\tiny $1\leq j\leq M_{B}$,\\ $1\leq k\leq N_{B}$,\\ $(j,k)\neq \vec{1}$}}}
						\frac{b_{j,k}}{b_{1,1}}z_1^{j-1}z_2^{k-1}\right)R(z_1,z_2).
		\end{multline*}
		We thus have
		\[r_{m,n}=\displaystyle\sum_{j=1}^{M_A}\sum_{k=1}^{N_A}\frac{a_{j,k}}{b_{1,1}}d_{m-j+1, n-k+1}\quad
			-\sum_{\parbox{1.6cm}{\centering{\tiny $1\leq j\leq M_{B}$,\\ $1\leq k\leq N_{B}$,\\ $(j,k)\neq \vec{1}$}}}
						\frac{b_{j,k}}{b_{1,1}}r_{m-j+1,n-k+1},\]
		expressing the output coefficient $r_{m,n}$ in terms of output coefficients which are either assumed to have been previously computed or are set to zero.

		In order for this scheme to be of practical use, it is desirable that the filter is {\it stable}, that is, that bounded inputs $D$ are transformed into bounded outputs $R$. In light of \eqref{recfiltereq}, one expects that this would require that $B(z_1,z_2)\neq 0$ for some subset of values $(z_1,z_2)$. Indeed, Justice and Shanks proved \cite{JS73} that stability holds if and only if $B(z_1,z_2)\neq 0$ on $\overline{\D^2}$. Unfortunately, this need not hold for all potentially useful filters $F=A/B$.

		To get around this difficulty, Shanks, Treitel, and Justice proposed replacing the two-variable function $B$ by its $H^2$-optimal approximants $\spn$. They argued that, intuitively speaking, $\spn$ should retain ``many" of the features of $B$. Moreover, in light of the one-variable case and numerical evidence in two variables, they conjectured \cite{Shanks72} that two-variable optimal approximants should be non-vanishing in the closed bidisk. Thus $1/\spn$ would be a stabilizing filtering substitute for $1/B$.

		Unfortunately, the Shanks-Treitel-Justice approach to stabilization does not work without additional assumptions on the target function $B$ since there are polynomials $B$ whose optimal approximants $\spn$ vanish inside the bidisk, making the filter $1/\spn$ unstable as well.

		We discuss zero set problems for optimal approximants  in Section \ref{sShanks}.
	
\section{Optimal approximants and weakly inner functions}\label{sInner}
	\subsection{Weakly Inner Functions}\label{ssWInner}
		Certain functions in $\mathcal{H}(\Omega)$ have the distinguishing property that their optimal approximants do not change as we increase 
		$\Pn$. Following \cite{DGKasym80, CMB18}, we make the following definition.
		\begin{definition}
		We say that $g\in\mathcal{H}(\Omega)\setminus\{0\}$ is {\it weakly inner} if
		\[\langle g, \chi_j g\rangle=0 \quad \textrm{for all} \quad j\neq 0.\]
		\end{definition}
		See \cite{ChMaRo19} for a comprehensive overview of notions of innerness for a wide range of reproducing kernel Hilbert spaces. Inner functions can also be defined for Banach spaces of analytic functions in a similar fashion using the notion of {\it Birkhoff-James orthogonality}, viz. \cite[Section 7]{ChMaRo19}.

		\begin{proposition}
		If $g\in \mathcal{H}(\Omega)$ is weakly inner, then its optimal approximants are all equal to a single constant: $\spn=p_0$ for $n=1,2,\ldots.$
		\end{proposition}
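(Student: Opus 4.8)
The plan is to combine the variational characterization of the optimal approximants with their uniqueness. Recall from the proof of Proposition \ref{matrixprop} that $\spn$ is the unique polynomial $p\in\Pn$ whose error $pg-1$ is orthogonal to the entire subspace $g\cdot\Pn$, i.e.
\[\langle pg-1,\chi_i g\rangle=0,\qquad i=0,1,\ldots,n.\]
Rather than computing each $\spn$ separately, I would show that the single constant produced by the zeroth-order approximant already satisfies \emph{all} of these orthogonality relations simultaneously, for every index $i\geq 0$. Granting this, uniqueness of the orthogonal projection onto $g\cdot\Pn$ immediately yields $\spn=p_0$ for every $n$, which is precisely the assertion.

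First I would pin down the zeroth approximant. Because $\mathcal{P}_0=\mathrm{span}\{1\}$ and $\chi_0=1$, the subspace $g\cdot\mathcal{P}_0$ is the line spanned by $g$, so $p_0$ is a constant, characterized by the single relation $\langle p_0 g-1,g\rangle=0$. This already disposes of the index $i=0$ for every $n$, since $\chi_0 g=g$. For $i\geq 1$ I would expand
\[\langle p_0 g-1,\chi_i g\rangle=p_0\langle g,\chi_i g\rangle-\langle 1,\chi_i g\rangle.\]
The first term on the right vanishes precisely by the hypothesis that $g$ is weakly inner, namely $\langle g,\chi_i g\rangle=0$ for $i\neq 0$. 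It therefore remains only to check that the second term vanishes as well.

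The one genuinely structural point, and the step I would be most careful about, is the claim that $\langle 1,\chi_i g\rangle=0$ for all $i\geq 1$. This is exactly the property of the underlying space recorded after Proposition \ref{matrixprop}: in all the spaces under consideration the reproducing kernel at the origin is the constant function, $\mathfrak{K}_0=1$, so that $\langle h,1\rangle=h(0)$ for every $h\in\mathcal{H}$. Applying this with $h=\chi_i g$ gives $\langle 1,\chi_i g\rangle=\overline{(\chi_i g)(0)}=\overline{\chi_i(0)\,g(0)}=0$, since the monomial $\chi_i$ has positive degree and hence vanishes at the origin for $i\geq 1$. Combining the two computations shows $p_0 g-1\perp\chi_i g$ for all $i\geq 0$; in particular the constant $p_0$ meets the defining conditions of $\spn$ for each $n$, and uniqueness finishes the argument. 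I would emphasize that this last structural fact is essential: in a space where $\langle 1,\chi_i g\rangle\neq 0$ the conclusion can fail, since one would then have to retain the inhomogeneous vector $\vec{b}$ of Proposition \ref{matrixprop}, and the solution need no longer be constant.
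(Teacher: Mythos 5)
Your proof is correct, and it runs on the same engine as the paper's: the orthogonality characterization of $\spn$ from Proposition \ref{matrixprop}, weak innerness to annihilate $\langle g,\chi_i g\rangle$ for $i\geq 1$, and the vanishing of $\langle 1,\chi_i g\rangle$ for $i\geq 1$. The organizational difference is that the paper solves the linear system $M\vec{c}\,^*=\overline{g}(0)\delta_{k,0}$ by noting that the first column of the Grammian $M$ is $\|g\|^2\delta_{k,0}$ and that this structure passes to $M^{-1}$, whereas you bypass matrix inversion entirely: you exhibit the constant $p_0$ as a simultaneous solution of all the normal equations and invoke uniqueness of the orthogonal projection onto $g\cdot\Pn$ (together with injectivity of multiplication by $g\neq 0$, which makes the representing polynomial unique). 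This is a slightly cleaner packaging of the same computation. A genuine merit of your write-up is that you isolate and justify the structural hypothesis $\langle 1,\chi_i g\rangle=0$ for $i\geq 1$, via the fact that $\mathfrak{K}_0\equiv 1$ in the spaces under consideration; the paper's proof uses this silently when it writes the right-hand side of the system as $\overline{g}(0)\delta_{k,0}$ (the remark following Proposition \ref{matrixprop} flags it but it is not a standing assumption). You are also right that this hypothesis is essential: under weak innerness a constant $p$ gives $\langle pg-1,\chi_i g\rangle=-\langle 1,\chi_i g\rangle$, so if some $\langle 1,\chi_i g\rangle\neq 0$ the optimal approximants could not all be constant.
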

		\begin{proof}
			By Proposition \ref{matrixprop}, the coefficients of $\spn=\sum_{j=0}^nc_j^*\chi_j$ are given by
			\[M\vec{c}\,^*=\overline{g}(0)\delta_{k,0},\]
			where $M_{j,k}=\langle g\chi_j,g\chi_k\rangle$ and $\delta_{k,0}=(1,0,\ldots, 0)^T$. By assumption, $M_{j,0}=c \delta_{k,0}$, so the first column of $M$ consists of all zeros past the first entry which is $c=\|g\|^2$. By elementary linear algebra, the inverse matrix $M^{-1}$ has the same property. But then
			\[\vec{c}\,^*=M^{-1}\bar{\theta}(0)\delta_{k,0}=\frac{\bar{\theta}(0)}{c}\delta_{k,0},\]
			and the proof is complete.
		\end{proof}
		\begin{corollary}
		If $g\in \mathcal{H}$ is weakly inner then $\nu_n(g)=\nu_0(g)$ for all $n=1, 2\ldots$.
		\end{corollary}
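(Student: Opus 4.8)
The plan is to read this off directly from the preceding Proposition, since the Corollary is essentially a restatement of that result in the language of the optimal norm. The key observation is that the optimal norm $\nu_n(g)=\|p_n^*\cdot g-1\|_{\mathcal{H}}$ depends on $n$ only through the optimal approximant $p_n^*$, so if the $p_n^*$ are all equal, then the associated norms must coincide as well.

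Concretely, I would proceed in two short steps. First, I would invoke the Proposition to conclude that, because $g$ is weakly inner, $p_n^*=p_0^*$ for every $n=1,2,\ldots$ (with the $n=0$ case being trivial since $p_0^*$ is itself a constant lying in $\mathcal{P}_0=\mathrm{span}\{1\}$). Second, I would substitute this equality into the definition of $\nu_n$, obtaining
\[
\nu_n(g)=\|p_n^*\cdot g-1\|_{\mathcal{H}}=\|p_0^*\cdot g-1\|_{\mathcal{H}}=\nu_0(g)
\]
for each $n\geq 1$, which is exactly the assertion.

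I do not expect any genuine obstacle here: all of the analytical content has already been absorbed into the Proposition, whose proof used the block structure of the Grammian $M$ (and of $M^{-1}$) forced by the weakly inner condition $\langle g,\chi_j g\rangle=0$ for $j\neq 0$. If anything, the only point worth flagging is notational consistency, namely making sure that the single constant called $p_0$ in the Proposition is understood as the zeroth optimal approximant $p_0^*$, so that the index $0$ in $\nu_0(g)$ matches. One could alternatively remark, as motivation, that the nesting $\mathcal{P}_0\subset\mathcal{P}_1\subset\cdots$ already guarantees that $\nu_n(g)$ is non-increasing; the Corollary then says that for weakly inner $g$ this non-increasing sequence is in fact constant, reflecting the fact that enlarging $\mathcal{P}_n$ yields no improvement in the approximation of $1$ by multiples of $g$.
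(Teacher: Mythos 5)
Your argument is correct and is exactly the intended one: the paper states this Corollary without proof precisely because it follows immediately from the preceding Proposition, by substituting $p_n^*=p_0^*$ into the definition $\nu_n(g)=\|p_n^*\cdot g-1\|_{\mathcal{H}}$. Your added remark about the non-increasing sequence $\nu_n$ becoming constant is a fine observation but not needed.
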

		
		One obvious class of weakly inner functions in the Hardy space is the class of {\it classical inner functions}: recall that a bounded holomorphic function 
		$\theta \colon \D^d \to \C$ is said to be {\it inner} if $|\theta(\zeta)|=1$ for almost every $\zeta \in \T^d$.

		\begin{lemma}\label{lemWIconstOA}
			Suppose $\theta\colon \D^d\to \C$ is inner. Then $\theta$ is weakly $H^2$-inner. 
		\end{lemma}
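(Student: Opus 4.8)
The plan is to compute the inner products $\langle \theta, \chi_j\theta\rangle_{H^2}$ directly using the boundary-integral representation of the $H^2$ norm on the polydisk. Since $\theta$ is inner it is in particular bounded, so $\theta\in H^\infty(\D^d)\subset H^2(\D^d)$, and for each monomial $\chi_j=z^k$ the product $\chi_j\theta$ again lies in $H^\infty\subset H^2$; thus every inner product $\langle \theta, \chi_j\theta\rangle_{H^2}$ is well defined. By the definition of weak innerness, it suffices to show that $\langle \theta, \chi_j\theta\rangle_{H^2}=0$ for each $j\neq 0$, that is, for every multi-index $k\neq 0$.

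First I would invoke the classical theory of $H^p$ spaces on the polydisk to pass to boundary values: every $f\in H^2(\D^d)$ admits nontangential boundary values $f^*$ almost everywhere on $\T^d$, and the Hilbert-space inner product is recovered as
\[\langle f, g\rangle_{H^2}=\int_{\T^d}f^*\,\overline{g^*}\,dm,\]
where $dm$ is normalized Lebesgue measure on the torus (equivalently, this is Parseval's identity matching Taylor coefficients with Fourier coefficients). Applying this with $f=\theta$ and $g=\chi_j\theta=z^k\theta$ gives
\[\langle \theta, z^k\theta\rangle_{H^2}=\int_{\T^d}\theta^*\,\overline{z^k\theta^*}\,dm=\int_{\T^d}\abs{\theta^*}^2\,\overline{z^k}\,dm.\]

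The key step is then the defining property of inner functions: $\abs{\theta^*}=1$ almost everywhere on $\T^d$, so $\abs{\theta^*}^2=1$ a.e. and the integral collapses to
\[\langle \theta, z^k\theta\rangle_{H^2}=\int_{\T^d}\overline{z^k}\,dm=\prod_{i=1}^d\int_{\T}\overline{z_i^{k_i}}\,dm(z_i).\]
Since $\int_{\T}\overline{z^m}\,dm=0$ whenever $m\neq 0$, and the assumption $k\neq 0$ forces at least one $k_i\neq 0$, each such product vanishes. This yields $\langle \theta, \chi_j\theta\rangle_{H^2}=0$ for all $j\neq 0$, which is precisely the statement that $\theta$ is weakly $H^2$-inner.

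The only genuine obstacle is the justification of the boundary-integral formula for the $H^2$ inner product in several variables, namely the existence of the boundary values $f^*$ and the identification of the inner product with integration over $\T^d$. This is classical (it follows from Rudin's theory of $H^p$ on polydiscs) but must be cited rather than reproved; once it is in place, the computation is immediate and the unimodularity of $\theta$ on the torus does all the work.
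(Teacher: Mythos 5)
Your proof is correct and follows essentially the same route as the paper's: both pass to boundary values on $\T^d$, use $|\theta|^2=1$ a.e.\ to collapse the weighted integral to $\int_{\T^d}\overline{\chi_j}\,dm$, and conclude by orthogonality of monomials on the torus. The paper's version of this computation actually shows the full Grammian $M_{jk}=\langle \chi_j\theta,\chi_k\theta\rangle$ is diagonal (not just the $k=0$ column you need), but the key step is identical, and your explicit attention to the existence of boundary values via Rudin's polydisc theory is a justification the paper leaves implicit.
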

		\begin{proof}
			Without loss of generality, we may assume $\theta(0,0)\neq 0$. Since $\theta$ is inner, we have 
			\begin{equation*}
				\langle \chi_j \theta, \chi_k \theta \rangle=\int_{\T^d}\chi_j \chi_k |\theta|^2dm=\int_{\T^d}\chi_j \chi_k dm=0,\quad \text{if } j\neq k.
			\end{equation*}
			Thus the matrix $M$ is diagonal, and
				$M^{-1}\bar{\theta}(0,0) e_1=\bar{\theta}(0,0)\delta_{k,0}$, as claimed.
		\end{proof}				
				
		When $d=1$, inner functions and weakly $H^2$-inner functions coincide. Weakly inner functions in the Bergman space of the unit disk are precisely the Bergman-inner functions \cite[Chapter 3]{HedBook}. In higher dimensions, however, a new phenomenon manifests itself and the class of classically inner functions forms a subclass of all weakly $H^2$-inner functions. This was originally observed by Delsarte, Genin, and Kamp, see \cite[Section 8]{DGKasym80}, who gave a power series example. In the next subsection, we give simpler examples.
				
	\subsection{Shapiro-Shields functions}\label{sssShapiroShields}
		By adapting a construction in \cite{CMB18}, which in turn is based on an older idea of H.S. Shapiro and A.L. Shields \cite{ShaShi62}, we can build weakly inner functions in any reproducing kernel Hilbert space with a finite prescribed zero set. See \cite{ChMaRo19} and \cite{Le19} for further generalizations.
		\begin{definition}
			Let $\Lambda=\{\lambda_1,\ldots,\lambda_n\} \in \Omega\setminus\{0\}$ be a given set of distinct points and let $\kerl^{\mathcal{H}}$ be the reproducing kernel of $\mathcal{H}$ at a point $\lambda$. Define $\mathfrak{K}_{\Lambda}$ to be the $n\times n$ matrix whose entries are given by $(\mathfrak{K}_{\Lambda})_{i,j}=\langle \mathfrak{K}_{\lambda_i}, \mathfrak{K}_{\lambda_j}\rangle$ and let $\vec{1}=(1,1,\ldots,1)\in \C^n$.

			The \emph{Shapiro-Shields function} for $\mathcal{H}$ associated with $\Lambda$ is defined as 
			\begin{equation} \label{eqShaShi}
				s_{\Lambda}(z)=
				\begin{vmatrix}
					1 &  \vec{1}\\
			 		(\mathfrak{K}_{\lambda_j})_{j=1}^n   &\mathfrak{K}_{\Lambda}.
				\end{vmatrix}
				\end{equation}
			
		\end{definition}
		
		The normalized Shapiro-Shields function is defined as
			\begin{equation}
			g_{\Lambda}(z)=\frac{s_{\Lambda}(z)}{\|s\|_{\Lambda}};
			\end{equation}
			normalization is not essential for our purposes.

		\begin{proposition}\label{shapshiprop}
			Suppose $\mathcal{H}(\Omega)$ is a reproducing kernel Hilbert space with monomials $\{\chi_j\}$ forming an orthogonal set.		
			Let $s_{\Lambda}$ be a Shapiro-Shields function for $\mathcal{H}$ associated with a finite set $\Lambda \in \Omega \setminus \{0\}$ of distinct points. Then $s_{\Lambda}$ is weakly inner in $\mathcal{H}(\Omega)$, and $s_{\Lambda}$ vanishes at each point of $\Lambda$.
		\end{proposition}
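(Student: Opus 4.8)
The plan is to read off the structure of $s_\Lambda$ from a cofactor expansion of the determinant \eqref{eqShaShi} along its first column. Expanding, one finds
\[
s_\Lambda(z)=\det(\mathfrak{K}_\Lambda)\cdot 1+\sum_{i=1}^n\gamma_i\,\mathfrak{K}_{\lambda_i}(z),
\]
where the $\gamma_i$ are signed minors, hence constants independent of $z$. The point is that $s_\Lambda$ lies in the finite-dimensional subspace $\mathrm{span}\{1,\mathfrak{K}_{\lambda_1},\ldots,\mathfrak{K}_{\lambda_n}\}$ of $\mathcal{H}$, with the coefficient of the constant function equal to $\det(\mathfrak{K}_\Lambda)$. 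Before anything else I would record that $s_\Lambda\not\equiv 0$, which is needed for the definition of weakly inner to apply: since the $\lambda_i$ are distinct, the Gramian $\mathfrak{K}_\Lambda$ of the kernels $\{\mathfrak{K}_{\lambda_i}\}$ is invertible, so $\det(\mathfrak{K}_\Lambda)\neq 0$; combined with $1\notin\mathrm{span}\{\mathfrak{K}_{\lambda_i}\}$ (which holds because $0\notin\Lambda$, so one can exhibit a polynomial vanishing on $\Lambda$ but not at the origin), this makes $s_\Lambda$ a nonzero element of $\mathcal{H}$.

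For the vanishing claim I would substitute $z=\lambda_k$ directly into \eqref{eqShaShi}. By the reproducing property, $\mathfrak{K}_{\lambda_i}(\lambda_k)=\langle\mathfrak{K}_{\lambda_i},\mathfrak{K}_{\lambda_k}\rangle=(\mathfrak{K}_\Lambda)_{i,k}$, so the first column of the matrix becomes $(1,(\mathfrak{K}_\Lambda)_{1,k},\ldots,(\mathfrak{K}_\Lambda)_{n,k})^{T}$, which is exactly its $(k+1)$st column. A determinant with two equal columns vanishes, giving $s_\Lambda(\lambda_k)=0$ for each $k$.

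The weakly inner condition is the main point, and I would verify $\langle\chi_j s_\Lambda,s_\Lambda\rangle=0$ for every $j\geq 1$ (whence $\langle s_\Lambda,\chi_j s_\Lambda\rangle=0$ by conjugation). The trick is to keep $s_\Lambda$ in the kernel form above in the \emph{second} slot and let point evaluation do the work. Pairing $\chi_j s_\Lambda$ against the kernel terms gives $\langle\chi_j s_\Lambda,\mathfrak{K}_{\lambda_i}\rangle=(\chi_j s_\Lambda)(\lambda_i)=\chi_j(\lambda_i)\,s_\Lambda(\lambda_i)=0$, using the vanishing just established. Pairing against the constant term is where the hypothesis that the monomials form an orthogonal set enters: since $\chi_0=1$, that hypothesis makes $h\mapsto\langle h,1\rangle$ equal to $\|1\|^2$ times evaluation at the origin, so $\langle\chi_j s_\Lambda,1\rangle=\|1\|^2(\chi_j s_\Lambda)(0)=\|1\|^2\chi_j(0)s_\Lambda(0)=0$ because $\chi_j(0)=0$ for $j\geq 1$. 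Combining these contributions through the expansion of $s_\Lambda$ yields $\langle\chi_j s_\Lambda,s_\Lambda\rangle=0$.

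The one place where real care is needed is the role of the orthogonality hypothesis: it is precisely what lets the ``constant'' entry of the Shapiro-Shields determinant behave like an extra interpolation node at the origin, so that $\chi_j s_\Lambda$ is seen to annihilate every member of $\{1,\mathfrak{K}_{\lambda_1},\ldots,\mathfrak{K}_{\lambda_n}\}$, equivalently vanishes at every point of $\{0\}\cup\Lambda$ in the sense of the relevant inner products. Once this is in place, both assertions reduce to the elementary facts $\chi_j(0)=0$ and $s_\Lambda|_\Lambda=0$. I anticipate no serious obstacle beyond bookkeeping; the two facts I would be sure to state cleanly are the invertibility of $\mathfrak{K}_\Lambda$ and the identity $\langle h,1\rangle=h(0)\|1\|^2$.
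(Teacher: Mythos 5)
Your proposal is correct and follows essentially the same route as the paper's own proof: vanishing on $\Lambda$ via equal columns in the determinant, non-triviality via linear independence of the kernels, and the weakly inner property via a cofactor expansion along the first column in the second slot of $\langle \chi_j s_\Lambda, s_\Lambda\rangle$, killing the kernel terms by point evaluation and the constant term by orthogonality of monomials. Your added details (the identity $\langle h,1\rangle = h(0)\|1\|^2$ and the argument that $1\notin\mathrm{span}\{\mathfrak{K}_{\lambda_i}\}$) simply flesh out steps the paper leaves as a sketch.
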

		\begin{proof}
			The proof is a straight-forward adaptation of the one-variable proof in \cite{CMB18} and is sketched for the reader's convenience. First, the fact that $s_{\Lambda}$ vanishes at each $\lambda_j$ follows from the fact that the first column in the determinant defining $s_{\Lambda}$ is equal to the $j+1$ column. 
			 
			To see that $s_{\Lambda}$ is non-trivial, it suffices to note that the kernels $\mathfrak{K}_{\lambda_1}, \ldots, \mathfrak{K}_{\lambda_n}$ are linearly independent. 

			To establish that $s_{\Lambda}$ is weakly inner, we perform a cofactor expansion of the second argument of  
				$\langle \chi_j s_{\Lambda}, s_{\lambda}\rangle$ 
			along the first column, 
			\begin{equation*}
				\langle \chi_j s_{\Lambda}, s_{\Lambda}\rangle
				=
				\cc{\det \mathfrak{K}_{\Lambda}}\langle \chi_j s_{\Lambda}, 1\rangle+\sum_{m=1}^{n}A_m\langle\chi_js_{\Lambda}, \mathfrak{K}_{\lambda_m}\rangle.
			\end{equation*}

			Finally, $\langle \chi_js_{\Lambda}, 1\rangle=0$ for $j\geq 1$ by orthogonality of monomials, while each $\langle \chi_j s_{\Lambda}, \mathfrak{K}_{\lambda_m}\rangle=\chi_j(\lambda_m)s_{\Lambda}(\lambda_m)$ is zero since $s_{\Lambda}(\lambda_m)=0$ for $m=1,\ldots, n$.
 		\end{proof}	
		
		For Hardy and Bergman spaces in the unit disk, normalized Shapiro-Shields functions recover well-known inner functions, see \cite{CMB18}. Here, we examine such functions in the bidisk and the ball.

		\begin{ex}
	 		The Shapiro-Shields function for $H^2(\D^2)$ associated with a point $(\lambda_1,\lambda_2)\in \D^2$ is
			\begin{equation*}
			 s_{\lambda}(z)=\frac{1}{(1-|\lambda_1|^2)(1-|\lambda_2|^2)}\frac{\overline{\lambda}_1(\lambda_1-z_1)+\overline{\lambda}_2(z_2-\lambda_2)-\overline{\lambda_1\lambda_2}(\lambda_1\lambda_2-z_1z_2)}{(1-\overline{\lambda}_1z_1)(1-\overline{\lambda}_2z_2)}.
			 \end{equation*}
			Several remarks are in order. As in one variable, the rational function $s_{\lambda}$ extends holomorphically to a bigger polydisk, whose radius depends on $\lambda$. Next, since $s_{\lambda}$ above is holomorphic of two variables, the function vanishes at points of the bidisk other than $\lambda$. If $\lambda_1=0$ or $\lambda_2=0$, we recover a multiple of a one-variable Blaschke factor, but in general $s_{\lambda}$ is not of product type. 

		  Finally,  since $s_{\lambda}$ violates the Rudin-Stout description of rational inner functions in polydisks \cite[Chapter 5]{RudSBook}, $s_{\lambda}$ is not inner in the classical sense.
		\end{ex}
		
		\begin{ex}
			In the Bergman space $A^2(\D^2)$, the Shapiro-Shields function associated with $(\lambda_1,\lambda_2)\in \D^2$ is
			\begin{multline*} 
				s_{\lambda}(z)= \left(\frac{1}{(1-|\lambda_1|^2)^2(1-|\lambda_2|^2)^2(1-\overline{\lambda}_1z_1)^2(1-\overline{\lambda}_2z_2)^2}\right)\cdot\\
					\quad
						\Big(
							\left(\cc{\,\lambda_1\lambda_2}\right)^2\left(z_1^2z_2^2-\lambda_1^2\lambda_2^2\right)
							+2\cc{\lambda_1}^2\,\cc{\lambda_2}\left(\lambda_1^2\lambda_2-z_1^2z_2\right)
							+2\cc{\lambda_1}\,\cc{\lambda_2}^2\left(\lambda_1\lambda_2^2-z_1,z_2^2\right)\\
							\qquad
							+\cc{\lambda_1}^2\left(z_1^2-\lambda_1^2\right)
							+4\cc{\,\lambda_1\lambda_2}\left(z_1z_2-\lambda_1\lambda_2\right)
							+\cc{\lambda_2}^2\left(z_2^2-\lambda_2^2\right)\\
							\qquad
							+2\cc{\lambda_1}\left(\lambda_1-z_1\right)
							+2\cc{\lambda_2}\left(\lambda_2-z_2\right)
						\Big)
			\end{multline*}			
		\end{ex}

		\begin{ex}
			For $d\geq 1$, let $\lambda \in \B^d$ be a point in the unit ball. The Shapiro-Shields function for $H^2_d$ associated with $\lambda$ is
			\[s_{\lambda}(z)=\frac{1}{1-\|\lambda\|^2}\frac{\langle \lambda-z, \lambda \rangle}{1-\langle z, \lambda\rangle}.\]
		\end{ex}
It would be interesting to conduct a systematic study of weakly inner functions in general reproducing kernel Hilbert spaces.
\section{Orthogonal polynomials}\label{sOG}
	\subsection{Optimal approximants and orthogonal polynomials}\label{ssOAsAndOGs}
		Another interesting aspect of optimal approximants is their connection to orthogonal polynomials of certain weighted spaces. This is discussed in one variable in \cite{JLMS16}, Section 3, where the authors write the optimal approximants in terms of orthogonal polynomials and exploit properties of orthogonal polynomials to show that, in the case of the Hardy space, optimal approximants are zero free in the unit disk. These connections were also observed by engineers in, e.g., \cite{GKstab77}, who also showed that they extend to the two variable Hardy space case and form the basis for the Shanks conjecture about the location of zeros of optimal approximants (Section \ref{sZerosShanks}).

		This relationship can be generalized to a reproducing kernel Hilbert space $\mathcal{H}(\Omega),$ with properties discussed in Section \ref{ss:RKHS} and inner product $\langle \cdot\,, \cdot \rangle_{\mathcal{H}}$. Recall that for $f\in \mathcal{H}(\Omega)$, the $n$th-order optimal polynomial approximant to $1/f$ with respect to $\Pn$ is defined as \(\spn(z)=\mathrm{Proj}_{f\cdot \Pn}[1](z)\). If we let $\set{f\phi_j}$ be an orthonormal basis for $f\cdot \Pn$, then we can consider the $\phi_j$ to be orthonormal polynomials in a weighted space $\mathcal{H}_f$ with inner product
		\begin{equation} \label{eq:weightednorm}
			\langle g, h\rangle_{\mathcal{H}_f} :=\langle gf , hf\rangle_{\mathcal{H}}.
		\end{equation}
		To avoid trivialities, we assume that $f$ is not identically zero, and does not vanish at the origin. Using the orthonormal basis $\set{f\phi_j}$, $f\spn$ can be expanded as
		\begin{equation}
			\left(f\spn\right)(z_1,z_2)=\sum_{k=0}^{n} \ip{1,f\phi_k}_{\mathcal{H}}\phi_k(z_1,z_2)f(z_1,z_2),
		\end{equation}
		and we can cancel to get
		\begin{equation}
			\spn(z_1,z_2)=\sum_{k=0}^{n} \ip{1,f\phi_k}_{\mathcal{H}}\phi_k(z_1,z_2)
		\end{equation}
		This in turn implies that
		\begin{equation}
			\langle1, f\phi_k\rangle \phi_n(z_1,z_2)=\spn(z_1,z_2)-p_{n-1}^*(z_1,z_2), \quad n=1,2,3\ldots.
			\label{OAvsOGformula}
		\end{equation}
		In certain favorable circumstances, the relation \eqref{OAvsOGformula} allows us to recover orthogonal polynomials from optimal approximants. When $f$ is weakly inner, however, the optimal approximants $\spn$ are all equal to the same constant. Therefore, the orthogonal polynomials cannot be extracted from the formula \eqref{OAvsOGformula}. In fact, we have the following.

		\begin{lemma} \label{OGiplemma}
			Suppose that for some $f\in \mathcal{H}(\Omega)\setminus\{0\}$ and some $n\in \N$, we have $\spn=p^*_{n-1}$. Then $\langle 1,f\phi_n\rangle=0$ for non-constant $\phi$. 
		\end{lemma}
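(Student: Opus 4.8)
The plan is to read the conclusion directly off the identity \eqref{OAvsOGformula} derived just above. Writing $\set{f\phi_j}$ for the orthonormal basis of $f\cdot\Pn$ obtained by orthonormalizing $\set{f\chi_j}$, that identity records the single new term picked up when the projection of $1$ onto $f\cdot\Pn$ is recomputed over the enlarged subspace, namely
\[
\langle 1, f\phi_n\rangle_{\mathcal{H}}\,\phi_n = \spn - p_{n-1}^*.
\]
The first step is simply to substitute the hypothesis $\spn = p_{n-1}^*$ into this identity. The right-hand side then vanishes identically, leaving
\[
\langle 1, f\phi_n\rangle_{\mathcal{H}}\,\phi_n \equiv 0
\]
as an identity between functions on $\Omega$.

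The second step is to cancel the factor $\phi_n$ and conclude that the scalar $\langle 1, f\phi_n\rangle_{\mathcal{H}}$ itself must vanish. This cancellation is legitimate precisely because $\phi_n$ is not the zero function, and establishing $\phi_n\not\equiv 0$ is the one point that needs care. I would argue it as follows: since $\set{f\phi_j}$ is orthonormal we have $\norm{f\phi_n}_{\mathcal{H}}=1$, so $f\phi_n\not\equiv 0$; and because $f\in\mathcal{H}(\Omega)\setminus\set{0}$ is a nonzero holomorphic function, the product $f\phi_n$ can vanish identically only if $\phi_n$ does. Equivalently, $\phi_n$ is the normalized polynomial completing the basis of $f\cdot\mathcal{P}_{n-1}$ to one of $f\cdot\Pn$, obtained by projecting $f\chi_n$ off $f\cdot\mathcal{P}_{n-1}$; it is nonzero exactly because $\chi_n\notin\mathcal{P}_{n-1}$. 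This is where the qualifier \emph{non-constant} enters: for $n\geq 1$ the adjoined monomial $\chi_n$ is non-constant and absent from $\mathcal{P}_{n-1}$, which forces $\phi_n\not\equiv 0$ and excludes the trivial case $\phi_0=\mathrm{const}$.

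Since the proof amounts to a substitution into an established identity followed by cancellation of a nonzero function, I do not expect any substantial obstacle. The only genuinely substantive point is the verification that $\phi_n\not\equiv 0$, as this is exactly what licenses passing from the vanishing of $\langle 1, f\phi_n\rangle_{\mathcal{H}}\,\phi_n$ to the vanishing of the coefficient $\langle 1, f\phi_n\rangle_{\mathcal{H}}$.
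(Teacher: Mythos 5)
Your proof is correct and is essentially the paper's own argument: the paper offers no separate proof, presenting the lemma as an immediate consequence of \eqref{OAvsOGformula}, and your substitution of $\spn=p_{n-1}^*$ into that identity followed by cancellation of $\phi_n$ is exactly that reading. Your additional verification that $\phi_n\not\equiv 0$ (via $\norm{f\phi_n}_{\mathcal{H}}=1$ together with $f\not\equiv 0$ and the fact that $\chi_n\notin\mathcal{P}_{n-1}$) carefully fills in the one point the paper leaves implicit.
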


		The main example considered in the engineering applications is the weighted Hardy space of the bidisk with inner product given by
		\begin{equation*}
			\langle g,h\rangle_{H^2,f} = \lim_{r\to 1^-}\int_0^{2\pi}\int_{0}^{2\pi} g(re^{i\theta_1},re^{i\theta_2}) \cc{h(re^{i\theta_1},re^{i\theta_2})} \abs{f(re^{i\theta_1},re^{i\theta_2})}^2 d\theta_1 d\theta_2
		\end{equation*}
		where $d\theta_1$ and $d\theta_2$ are normalized Lebesgue measure on the circle. Similarly, for the Bergman space in the bidisk we have
		\[\langle g,h\rangle_{A^2,f}=\iint_{\mathbb{D}^2}g(z_1,z_2)\overline{h(z_1,z_2)}|f(z)|^2dA(z_1)dA(z_2)\]
but for general pairs $(\alpha_1,\alpha_2)$, the inner product $\langle\cdot, \cdot\rangle_f$ is not expressible as a weighted integral of $g$ and $h$ over the bidisk.		
		 In all the $\da$ spaces, however,
			\[\ip{1,f\phi_k}=\cc{f(0)\phi_k(0)},\]
		and we obtain the following immediate consequence of Lemma \ref{OGiplemma}.

		\begin{lemma}
		Suppose that for some $f\in\da \setminus\{0\}$ and some $n\in \N$, we have $\spn=p^*_{n-1}$. Then $\phi_n(0)=0$.

		In particular, if $f$ is weakly inner in $\da$, then all orthogonal polynomials $\phi_n$ in $\mathfrak{D}_{\alpha_1,\alpha_2,f}$ vanish at the origin for $n\geq 1$.
		\end{lemma}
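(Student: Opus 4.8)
The plan is to combine Lemma~\ref{OGiplemma} with the special form taken by the functional $\langle 1, \cdot\rangle$ in the $\da$ spaces. For the first assertion, I would begin from the hypothesis $\spn = p^*_{n-1}$ and feed it directly into Lemma~\ref{OGiplemma}: since $\phi_n$ is a non-constant orthonormal polynomial for $n \geq 1$, it is not identically zero, and so the identity \eqref{OAvsOGformula}, which reads $\langle 1, f\phi_n\rangle\,\phi_n = \spn - p^*_{n-1}$, forces $\langle 1, f\phi_n\rangle = 0$.

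The decisive extra ingredient is the reproducing identity $\langle 1, f\phi_k\rangle = \overline{f(0)\phi_k(0)}$ recorded just above the statement for every $\da$ space. This is really the crux of the argument: it holds because the $\da$ inner product is the weighted $\ell^2$ pairing of Taylor coefficients with weight $(j+1)^{\alpha_1}(k+1)^{\alpha_2}$, whose value at the multi-index $(0,0)$ is $1$; pairing against the constant function $1$ therefore extracts precisely the conjugate of the zeroth Taylor coefficient of $f\phi_k$, namely $\overline{f(0)\phi_k(0)}$. For a general weighted space no such clean formula is available, which is exactly why the lemma is stated for the $\da$ spaces specifically. Setting $k = n$ and combining with the previous paragraph gives $\overline{f(0)\phi_n(0)} = 0$; invoking the standing assumption that $f$ does not vanish at the origin, so that $f(0) \neq 0$, we may cancel and conclude $\phi_n(0) = 0$, which is the first claim.

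For the ``in particular'' statement I would appeal to the earlier proposition showing that a weakly inner $f$ has constant optimal approximants, $\spn = p_0$ for all $n = 1, 2, \ldots$. Hence $\spn = p^*_{n-1}$ for every $n \geq 1$, and applying the first part of the lemma to each such $n$ yields $\phi_n(0) = 0$ for all $n \geq 1$. I do not anticipate a genuine obstacle: the proof is a short chaining of Lemma~\ref{OGiplemma}, the $\da$-specific identity for $\langle 1, f\phi_k\rangle$, and the hypothesis $f(0) \neq 0$. The only point demanding a moment's care is the passage from $\langle 1, f\phi_n\rangle\,\phi_n \equiv 0$ to $\langle 1, f\phi_n\rangle = 0$, which is legitimate precisely because $\phi_n$ is non-constant, hence nonzero, for $n \geq 1$.
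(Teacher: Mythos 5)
Your proof is correct and follows exactly the route the paper intends: the paper presents this lemma as an ``immediate consequence'' of Lemma~\ref{OGiplemma} combined with the $\da$-identity $\ip{1,f\phi_k}=\cc{f(0)\phi_k(0)}$ and the standing assumption $f(0)\neq 0$, which is precisely your chain of reasoning. Your added justifications---why the coefficient-pairing formula holds in $\da$, and why $\phi_n\not\equiv 0$ legitimizes cancelling it in \eqref{OAvsOGformula}---are accurate fillings-in of details the paper leaves implicit.
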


		\begin{ex}
		If $f$ is a classical inner function in $H^2(\D^d)$ then the weighted norm $\langle, \cdot, \cdot \rangle_{f}$ coincides with the usual $H^2$ norm, and the set of monomials $\{z_1^kz_2^l\}_{k,l\in \N}$ yields orthogonal polynomials in the weighted space, all vanishing at the origin whenever $(k,l)\neq(0,0)$. 

		For weakly inner but not classically inner functions, one expects orthogonal polynomials to exhibit a more complicated structure.
		\end{ex}

	\subsection{A class of weighted orthogonal polynomials}
		Our simple example $f(z_1,z_2)=1-az_1z_2$ ($a=1$ in the bidisk $a=\sqrt{2}$ in the $2$-ball) exhibits optimal approximants and orthogonal polynomials with interesting behavior. As discussed in Examples \ref{diriexpoly} and \ref{DAexpoly}, the optimal approximants to $1/f$ contain only monomials of the form $(z_1z_2)^n$, that is, monomials on the main diagonal in Figure \ref{fig:triangle}. Because of this, not all of the orthogonal polynomials for the weight $f$ can be reconstructed from the optimal approximants for $1/f$. This is similar to the case of a weakly inner function, but, in contrast, the differences of the optimal approximants do give non-constant polynomials that are orthogonal, just not all the polynomials needed to span the $\mathcal{P}_n$. For instance, the polynomial $\chi_1=z_1$ cannot be expressed as a linear combination of polynomials in $z_1z_2$.

		Here, we assume the reproducing kernel Hilbert space $\mathcal{H}(\Omega)$ discussed in Section \ref{ss:RKHS} has the additional property  that the monomials are pairwise orthogonal. (The Drury-Arveson space and each Dirichlet-type space have this property.) Exploiting the diagonal structure in the monomial ordering allows us express the full collection of orthogonal polynomials for a reproducing kernel Hilbert space weighted by a general polynomial in $z_1z_2$ (as in \eqref{eq:weightednorm}) in terms of one-variable polynomials. We begin with a lemma about the inner products of monomials in the weighted space $\mathcal{H}_f$.

		\begin{lemma}\label{lem:ogstar}
			Let $f(z_1,z_2)=1+a_1z_1z_2 + a_2(z_1z_2)^2 +\cdots+a_N(z_1z_2)^N$ be a polynomial and let $\mathcal{H}$ be a reproducing kernel Hilbert space in which the monomials are orthogonal. Consider $\mathcal{H}_f$, the space weighted by $f$ with inner product $\langle g, h\rangle_{\mathcal{H}_f} :=\langle gf , hf\rangle_{\mathcal{H}}$. For nonnegative integers $\ell_1\leq k_1$, $\ell_2\leq k_2$, and for an integer $J$ such that $0\leq J \leq \min(k_1,k_2,N)$,
				\begin{equation*}
					\ip{z_1^{k_1}z_2^{k_2},\,z_1^{\ell_1}z_2^{\ell_2}}_f=
						\begin{cases}
							\displaystyle\sum_{n=0}^{N-J} a_n\cc{a_{n+J}} 
									\norm{z_1^{k_1+n}z_2^{k_2+n}}_f^2 &\quad \mathrm{if}\;\;\parbox{2.5cm}{$\ell_1=k_1-J$\\ $\ell_2=k_2-J$},\\
									~&~\\
							0 &\quad \mathrm{otherwise.}
						\end{cases}
				\end{equation*}
		\end{lemma}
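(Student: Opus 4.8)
The plan is to compute the weighted inner product
$\ip{z_1^{k_1}z_2^{k_2},\,z_1^{\ell_1}z_2^{\ell_2}}_f$ directly from the definition $\ip{g,h}_f=\ip{gf,hf}_{\mathcal H}$, writing out $f=\sum_{n=0}^N a_n(z_1z_2)^n$ and expanding the product. Concretely, $z_1^{k_1}z_2^{k_2}f=\sum_{n=0}^N a_n z_1^{k_1+n}z_2^{k_2+n}$ and similarly $z_1^{\ell_1}z_2^{\ell_2}f=\sum_{m=0}^N a_m z_1^{\ell_1+m}z_2^{\ell_2+m}$. Taking the ambient inner product and using sesquilinearity gives a double sum $\sum_{n,m} a_n\cc{a_m}\ip{z_1^{k_1+n}z_2^{k_2+n},\,z_1^{\ell_1+m}z_2^{\ell_2+m}}_{\mathcal H}$.

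The crucial ingredient is the standing orthogonality assumption on monomials in $\mathcal H$: the term $\ip{z_1^{k_1+n}z_2^{k_2+n},\,z_1^{\ell_1+m}z_2^{\ell_2+m}}_{\mathcal H}$ vanishes unless the exponents match in each variable, i.e.\ unless $k_1+n=\ell_1+m$ and $k_2+n=\ell_2+m$. First I would record the consequence of these two matching conditions: subtracting them forces $k_1-\ell_1=k_2-\ell_2$, so a nonzero contribution can only occur when the ``defect'' in each variable is the same integer, which I will call $J:=k_1-\ell_1=k_2-\ell_2$. This immediately yields the \emph{otherwise} case of the statement: if $\ell_1\neq k_1-J$ or $\ell_2\neq k_2-J$ for this common value, every term drops out and the inner product is $0$. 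Note $J\ge 0$ since $\ell_i\le k_i$, and $J\le\min(k_1,k_2)$ since $\ell_i\ge 0$; the constraint $J\le N$ arises because we need indices $n,m\in\{0,\dots,N\}$ with $m=n+J$, which is only possible when $J\le N$.

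Assuming now $\ell_1=k_1-J$ and $\ell_2=k_2-J$, the matching conditions become $m=n+J$ in both variables simultaneously, so the double sum collapses to a single sum over $n$ with $m=n+J$, subject to $0\le n\le N$ and $0\le n+J\le N$, i.e.\ $0\le n\le N-J$. Each surviving term is $a_n\cc{a_{n+J}}\,\norm{z_1^{k_1+n}z_2^{k_2+n}}_{\mathcal H}^2$, and since on the diagonal the two arguments coincide this squared $\mathcal H$-norm equals $\norm{z_1^{k_1+n}z_2^{k_2+n}}_f^2$ divided by the appropriate factor---here I should be careful and simply identify $\norm{z_1^{k_1+n}z_2^{k_2+n}}_{\mathcal H}$ with the quantity the statement writes as $\norm{z_1^{k_1+n}z_2^{k_2+n}}_f$; the reader should check that the subscript in the claimed formula refers to the ambient norm of the shifted diagonal monomial. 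Summing over $n$ from $0$ to $N-J$ reproduces exactly $\sum_{n=0}^{N-J}a_n\cc{a_{n+J}}\norm{z_1^{k_1+n}z_2^{k_2+n}}^2$, completing the proof.

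The argument is essentially bookkeeping, so there is no deep obstacle; the one point requiring genuine care is the derivation and correct packaging of the index constraint. I would make sure the two scalar matching equations are handled together rather than separately, since it is their \emph{joint} solvability (a single $n$ producing agreement in both variables at once) that forces the rigid shape $\ell_1=k_1-J,\ \ell_2=k_2-J$ and pins down the range $0\le n\le N-J$. The only other subtlety is verifying the norm normalization in the final sum matches the statement's notation; this is a matter of tracking which norm ($\mathcal H$ versus $\mathcal H_f$) appears, and I would state the convention explicitly to avoid ambiguity.
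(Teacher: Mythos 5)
Your proof is correct and follows essentially the same route as the paper's: expand by sesquilinearity, invoke orthogonality of monomials in $\mathcal{H}$, derive the joint matching condition forcing $k_1-\ell_1=k_2-\ell_2=J$ and $m=n+J$, and collapse the double sum to $\sum_{n=0}^{N-J}a_n\cc{a_{n+J}}\norm{z_1^{k_1+n}z_2^{k_2+n}}^2$. Your caution about the norm subscript is also well placed: the computation yields the ambient norm $\norm{z_1^{k_1+n}z_2^{k_2+n}}_{\mathcal{H}}^2$, and the subscript $f$ appearing in the statement (and in the final line of the paper's own proof) should indeed be read as the $\mathcal{H}$-norm.
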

		\begin{proof}
			Expanding the inner product gives
			\begin{align}
				\ip{z_1^{k_1}z_2^{k_2},\,z_1^{\ell_1}z_2^{\ell_2}}_f
				&=\ip{z_1^{k_1}z_2^{k_2}f,\,z_1^{\ell_1}z_2^{\ell_2}f}_{\mathcal{H}}\nonumber\\
				&=\sum_{m=0}^N\sum_{n=0}^N a_n\cc{a_m} \ip{z_1^{k_1+n}z_2^{k_2+n},\,z_1^{\ell_1+m}z_2^{\ell_2+m}}_{\mathcal{H}}.\label{eq:ipsum_before}
			\end{align}
			Because the monomials are orthogonal in $\mathcal{H}$,
			\begin{equation*}\label{eq:ipzero}
				\ip{z_1^{k_1+n}z_2^{k_2+n},\,z_1^{\ell_1+m}z_2^{\ell_2+m}}_{\mathcal{H}}=
				\begin{cases}
					\norm{z_1^{k_1+n}z_2^{k_2+n}}_{\mathcal{H}}^2 &\quad \mathrm{if}\;\;\parbox{2.5cm}{$\ell_1+m=k_1+n$\\ $\ell_2+m=k_2+n$},\\
					~&~\\
					0 &\qquad \mathrm{otherwise.}
				\end{cases}
			\end{equation*}
			Since $0\leq \ell_1 \leq k_1$ and $0\leq \ell_2 \leq k_2$, there are integers $J_1$, $J_2$ such that $0\leq J_1 \leq k_1$ and $0\leq J_2 \leq k_2$ with $\ell_1=k_1-J_1$ and $\ell_2=k_2-J_2$. For each term of the sum, $m-n$ is fixed, so for the nonzero terms, where \(\ell_1+m=k_1+n\) and \(\ell_2+m=k_2+n\),
				\[J_1=k_1-\ell_1=m-n=k_2-\ell_2=J_2,\]
			so let $J=J_1=J_2$. 

			Then, the conditions for the inner product to be non zero,
			$$\ell_1+m=k_1+n \quad \mathrm{and}\quad \ell_2+m=k_2+n,$$ 
			become $m=n+J,$ and because $0\leq m,n \leq N$, $\abs{m-n}\leq N$, so $J\leq N$. Finally, we can rewrite \eqref{eq:ipsum_before} as
			\begin{align*}
				\ip{z_1^{k_1}z_2^{k_2},\,z_1^{\ell_1}z_2^{\ell_2}}_f
				&=\sum_{m=0}^N\sum_{n=0}^N a_n\cc{a_m} \ip{z_1^{k_1+n}z_2^{k_2+n},\,z_1^{k_1-J+m}z_2^{k_2-J+m}}_{\mathcal{H}}\\
				&=\sum_{n=0}^{N-J} a_n\cc{a_{n+J}} \norm{z_1^{k_1+n}z_2^{k_2+n}}_f^2,
			\end{align*}
			when $\ell_1=k_1-J \;\mathrm{and}\; \ell_2=k_2-J.$ If these conditions do not hold, every term in the sum \eqref{eq:ipsum_before} will be zero.
		\end{proof}

		We now give a structural description of the full family of orthogonal polynomials for weights of the form $f=\sum_{k=0}^na_k(z_1z_2)^k$. Here, we shall consider the monomials in degree lexicographic order: 
			$$\chi_0=1,\, \chi_1=z_1,\, \chi_2=z_2,\, \chi_3=z_1^2,\quad \dots,$$
		and polynomial subspaces $\mathcal{P}_n=\mathrm{span}\set{\chi_0,\dots,\chi_n}.$ We let $\deg_{z_j}p$ denote the $z_j$-degree of a multi-variable polynomial. We consider orthogonal polynomials $\{\varphi_k\}$ for $\mathcal{H}_f$, ordered so that $\mathrm{span}\set{\varphi_0,\dots,\varphi_n}=\mathcal{P}_n$, and we assume that $\deg_{z_1}\varphi_k=\deg_{z_1}\chi_k$ and $\deg_{z_1}\varphi_k=\deg_{z_2}\varphi_k=\deg_{z_2}\chi_k$, and that each $\varphi_k$ is monic.
		\begin{thm}\label{thm:ogstar}
			For each $N\in \N_0$, let $$M=\max\set{\deg_{z_1}\varphi_N,\,\deg_{z_2}\varphi_N}-\min\set{\deg_{z_1}\varphi_N,\,\deg_{z_2}\varphi_N}.$$ There exists a unique $r_N \in \C[x]$ such that 
			\begin{enumerate}
				\item If $\deg_{z_1}\varphi_N  \geq  \deg_{z_2}\varphi_N$, then $\varphi_N=z_1^M r_N(z_1z_2)$
				\item If $\deg_{z_1}\varphi_N  \leq  \deg_{z_2}\varphi_N$, then $\varphi_N=z_2^M r_N(z_1z_2)$.
			\end{enumerate}
		\end{thm}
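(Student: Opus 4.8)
The plan is to exploit a grading of the polynomial ring by the quantity $\delta(z_1^az_2^b)=a-b$, which is preserved by multiplication by $f$ (each monomial of $f$ lies on the main diagonal and hence has $\delta=0$). The first step is to isolate from Lemma \ref{lem:ogstar} the statement that monomials with distinct values of $\delta$ are orthogonal in $\mathcal{H}_f$: expanding $\langle z_1^az_2^b,z_1^cz_2^d\rangle_f=\langle z_1^az_2^bf,\,z_1^cz_2^df\rangle_{\mathcal{H}}$ and using orthogonality of monomials in $\mathcal{H}$, a surviving term forces $a-c=b-d$, i.e. $a-b=c-d$. Hence, writing $V_m=\mathrm{span}\{z_1^az_2^b\colon a-b=m\}$, the polynomial ring decomposes as an orthogonal direct sum $\bigoplus_{m\in\Z}V_m$ with respect to $\langle\cdot,\cdot\rangle_f$. (Equivalently, $\delta$ is the weight under the torus action $(z_1,z_2)\mapsto(e^{i\theta}z_1,e^{-i\theta}z_2)$, which is unitary on $\mathcal{H}_f$ because $f$ is invariant and the monomials are orthogonal.)

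The second step is to show, by induction along the degree lexicographic order, that each orthogonal polynomial lies in a single graded piece, $\varphi_k\in V_{\delta(\chi_k)}$. The $\varphi_k$ are produced by Gram--Schmidt on $\chi_0,\chi_1,\ldots$, so $\varphi_N=\chi_N-\sum_{j<N}c_{N,j}\varphi_j$ with $c_{N,j}=\langle\chi_N,\varphi_j\rangle_f/\langle\varphi_j,\varphi_j\rangle_f$. Assuming inductively that $\varphi_j\in V_{\delta(\chi_j)}$ for $j<N$, the coefficient $c_{N,j}$ vanishes unless $\delta(\chi_j)=\delta(\chi_N)$, by the orthogonality of the grading established in the first step; consequently $\varphi_N=\chi_N-\sum_{j\colon\delta(\chi_j)=\delta(\chi_N)}c_{N,j}\varphi_j$ is a combination of elements of $V_{\delta(\chi_N)}$, and thus lies in $V_{\delta(\chi_N)}$.

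The final step translates membership in $V_m$ into the stated form and checks uniqueness. Writing $M=|\deg_{z_1}\varphi_N-\deg_{z_2}\varphi_N|=|\delta(\chi_N)|$, suppose first that $\deg_{z_1}\varphi_N\geq\deg_{z_2}\varphi_N$, so $\delta(\chi_N)=M\geq0$. Every monomial $z_1^az_2^b\in V_M$ satisfies $a=b+M$, hence equals $z_1^M(z_1z_2)^b$; therefore any element of $V_M$ is $z_1^Mr(z_1z_2)$ for a unique $r\in\C[x]$, the uniqueness coming from the fact that $x^b\mapsto z_1^M(z_1z_2)^b$ sends distinct monomials to distinct monomials. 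This gives $\varphi_N=z_1^Mr_N(z_1z_2)$, and the symmetric argument with the roles of $z_1$ and $z_2$ interchanged handles $\deg_{z_2}\varphi_N\geq\deg_{z_1}\varphi_N$, yielding $\varphi_N=z_2^Mr_N(z_1z_2)$; the two cases agree when $M=0$.

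I expect the only point requiring genuine care, rather than routine verification, is the clean formulation of the first step, namely extracting from Lemma \ref{lem:ogstar} that the grading by $\delta$ is orthogonal for $\langle\cdot,\cdot\rangle_f$, since once that is in place the Gram--Schmidt induction and the algebraic rewriting are essentially forced. One should also confirm that the monicity and degree normalizations assumed for the $\varphi_k$ are compatible with the construction: within $V_{\delta(\chi_N)}$ the monomial $\chi_N$ is the highest in degree lexicographic order appearing in $\varphi_N$, so that $r_N$ comes out monic of degree $\min\{\deg_{z_1}\varphi_N,\deg_{z_2}\varphi_N\}$, matching the leading term $\chi_N$.
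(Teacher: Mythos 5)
Your proof is correct and follows essentially the same route as the paper's: an induction along the degree lexicographic order using the Gram--Schmidt recursion, with the orthogonality content of Lemma \ref{lem:ogstar} killing every cross term. Your orthogonal grading $\bigoplus_{m}V_m$ by $\delta(z_1^az_2^b)=a-b$ (and the torus-action remark) is just a cleaner repackaging of the paper's observation that the only surviving inner products come from $\varphi_k$ containing monomials $z_1^{A-J}z_2^{B-J}$, i.e.\ monomials on the same diagonal.
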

		The bidegree of each $r_N$ is implicit from degree lexicographical ordering.
		\begin{proof}
			Without loss of generality, assume $\chi_N=z_1^Az_2^B$ where $A\geq B$ so that $M=A-B.$ When $N=0,$ $\chi_0=1$, so $M=0,$ and $r_0(x)=1.$
			
			We proceed by induction on $N$: assume that the theorem holds for $k<N$, so that for each such $k$, we have $\varphi_k=z_1^{M_k}r_k(z_1z_2)$ or $\varphi_k=z_2^{M_k}r_k(z_1z_2)$. By the Gram-Schmidt process,
				\begin{equation}\label{eq:GramSchmidt}
					\varphi_N=z_1^Az_2^B-\sum_{k=0}^{N-1} \frac{\ip{z_1^Az_2^B,\varphi_k}_f}{\norm{\varphi_k}^2_f}\varphi_k.
				\end{equation}
			By Lemma \ref{lem:ogstar}, the inner products in \eqref{eq:GramSchmidt} are zero except when $\varphi_k$ contains a monomial of the form $z_1^{A-J}z_2^{B-J}$ for some $J\leq \min\set{A,B}$. This can be rewritten 
			\begin{equation} \label{eq:goodform}
				z_1^{A-J}z_2^{B-J}=z_1^{B+M-J}z_2^{B-J}.
			\end{equation}
			Any $\varphi_k$ that contains a term of the form \eqref{eq:goodform} contains only monomials that can be written as $z_1^M(z_1z_2)^j$ (by the inductive hypothesis). Therefore, every term of $\varphi_N$ can be written as $z_1^M(z_1z_2)^j$, so $\varphi_N=z_1^M r(z_1z_2)$.
		\end{proof}
		Thus, determining two-variable orthogonal polynomials reduces to finding one variable-polynomials, one family for each row in Figure \ref{fig:triangle}. It is also apparent that all off-diagonal orthogonal polynomials vanish at the origin, confirming what we had already seen from forming successive differences of the corresponding optimal approximants.

		In the particular case $\mathcal{H}=H^2$ with weight $f(z_1,z_2)=1-z_1z_2$, we obtain orthogonal polynomials of a particularly attractive form: here, the $r_N(x)$ can be shown to be the orthogonal polynomials in the one variable weight $1-x$, as in \cite[p.86]{Simon1}.
			
		\begin{corollary}
		For $n=0,1,\ldots$, let
		\[r_n(x)=\frac{1}{n+1}\sum_{k=0}^n(k+1)z^k.\] 
		Then the polynomials
		\[\varphi^{(1)}_{M,m}(z_1,z_2)=z_1^Mr_m(z_1z_2) \quad \text{and}\quad \varphi^{(2)}_{N,n}(z_1,z_2)=z_2^Nr_n(z_1z_2), \]
		with $M,m,N,n\in \N_0$, form an orthogonal basis for $H^2_{1-z_1z_2}(\D^2)$.
		\end{corollary}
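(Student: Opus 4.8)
The plan is to read the orthogonal polynomials off Theorem \ref{thm:ogstar} and then pin down the one–variable families explicitly. Theorem \ref{thm:ogstar} already guarantees that every member of a Gram--Schmidt orthogonalization of the monomials (in degree lexicographic order) for $H^2_{1-z_1z_2}(\D^2)$ has the shape $z_1^M r_M(z_1z_2)$ or $z_2^N r_N(z_1z_2)$; what remains is to (i) see that a single one–variable family $\{r_n\}$ serves every row of Figure \ref{fig:triangle}, (ii) identify that family, and (iii) check completeness. First I would use Lemma \ref{lem:ogstar} with $f=1-z_1z_2$ (so $N=1$, $a_0=1$, $a_1=-1$) to record that $\ip{z_1^{k_1}z_2^{k_2},z_1^{\ell_1}z_2^{\ell_2}}_f$ vanishes unless $k_1-\ell_1=k_2-\ell_2$. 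In particular, monomials lying on different rows of Figure \ref{fig:triangle} (that is, with different values of $\deg_{z_1}-\deg_{z_2}$) are automatically $f$–orthogonal, so the orthogonalization decouples row by row.

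Next I would compute the within-row inner products. Fixing a row $M\ge 0$ and using that all monomial norms in $H^2$ equal $1$, Lemma \ref{lem:ogstar} gives $\ip{z_1^{M+j}z_2^{j},z_1^{M+j}z_2^{j}}_f=|a_0|^2+|a_1|^2=2$ and $\ip{z_1^{M+j}z_2^{j},z_1^{M+j-1}z_2^{j-1}}_f=a_0\cc{a_1}=-1$, with all other pairings zero. The crucial point is that these values are independent of $M$: on every row the Gram matrix of the monomials $z_1^M(z_1z_2)^j$, $j=0,1,2,\dots$, is the tridiagonal matrix with $2$ on the diagonal and $-1$ off it. This is exactly the Gram matrix of $\{z^j\}$ in the one–variable space $H^2$ weighted by $|1-z|^2$, so orthogonalizing any row produces $z_1^M r_m(z_1z_2)$ (or $z_2^N r_n(z_1z_2)$) with one and the same monic family $r_m$, namely the orthogonal polynomials for the weight $1-x$ of \cite[p.~86]{Simon1}.

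To obtain the closed form I would verify directly that $r_n(x)=\frac{1}{n+1}\sum_{k=0}^n(k+1)x^k$ satisfies the orthogonality relations dictated by this tridiagonal matrix. Writing $r_n=\sum_k c_k^{(n)}x^k$ with $c_k^{(n)}=(k+1)/(n+1)$, the pairing against $x^m$ collapses, by the $(2,-1)$ structure, to the three-term expression $2c_m^{(n)}-c_{m+1}^{(n)}-c_{m-1}^{(n)}$, which equals $0$ for $1\le m\le n-1$ and also at $m=0$ (where the $c_{-1}$ term is absent); since $c_n^{(n)}=1$, each $r_n$ is monic and orthogonal to all lower powers, so it is the required polynomial. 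This is a short finite calculation rather than a genuine obstacle.

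Finally I would address completeness and bookkeeping. Since the polynomials are dense in $H^2_{1-z_1z_2}(\D^2)$, it suffices that the span of the proposed family contains every monomial; and indeed $z_1^{k_1}z_2^{k_2}$ with $k_1\ge k_2$ equals $z_1^{k_1-k_2}(z_1z_2)^{k_2}$, which lies in $\mathrm{span}\{z_1^{k_1-k_2}r_j(z_1z_2):0\le j\le k_2\}$ because $r_0,\dots,r_{k_2}$ span the same space as $1,z_1z_2,\dots,(z_1z_2)^{k_2}$; the case $k_1\le k_2$ is symmetric. Together with the row-orthogonality from Lemma \ref{lem:ogstar} and the within-row orthogonality just established, this shows $\{\varphi^{(1)}_{M,m}\}\cup\{\varphi^{(2)}_{N,n}\}$ is a complete orthogonal system. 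The only point requiring care—and the closest thing to an obstacle here—is that the diagonal row be counted once: $\varphi^{(1)}_{0,m}$ and $\varphi^{(2)}_{0,m}$ both equal $r_m(z_1z_2)$, so the two families overlap precisely in the $M=N=0$ terms and should be listed without repetition. With that convention the stated family is an orthogonal basis.
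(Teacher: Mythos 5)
Your proof is correct, but it reaches the conclusion by a more self-contained route than the paper does. The paper's own proof is two lines: since multiplication by $z_1$ and by $z_2$ is an isometry on $H^2(\D^2)$, the orthogonality conditions along each row of Figure \ref{fig:triangle} reduce to conditions on the main diagonal, and there the orthogonal polynomials are recovered by differencing the already-computed optimal approximants to $1/(1-z_1z_2)$ (Example \ref{diriexpoly} together with formula \eqref{OAvsOGformula}); the identification of $r_n$ with the orthogonal polynomials for the one-variable weight $1-x$ is cited to \cite[p.86]{Simon1} rather than verified. You replace both steps: the row decoupling and the row-independence of the Gram matrix come from Lemma \ref{lem:ogstar} with $a_0=1$, $a_1=-1$ and all $H^2$ monomial norms equal to $1$, yielding the same tridiagonal $(2,-1)$ matrix on every row --- which is the paper's isometry argument in disguise, since that isometry is exactly the statement that monomial norms do not change under multiplication by $z_1$ or $z_2$ --- and you identify the one-variable family by checking $2c_m^{(n)}-c_{m+1}^{(n)}-c_{m-1}^{(n)}=0$ directly for $c_k^{(n)}=(k+1)/(n+1)$, rather than by differencing the $p^*_{\oslash n}$. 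What the paper's route buys is brevity and a tie-in with the section's theme of extracting orthogonal polynomials from optimal approximants; what yours buys is independence from that machinery, an actual verification of the closed form instead of a citation, and an explicit completeness argument (span of the family contains every monomial, polynomials are dense in the weighted space), which the paper leaves implicit. Your final observation that $\varphi^{(1)}_{0,m}=\varphi^{(2)}_{0,m}=r_m(z_1z_2)$, so the diagonal family is listed twice in the statement and must be counted once, is a correct, if minor, point of precision that the paper does not address.
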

		\begin{proof}
		It suffices to note that multiplication by $z_1$ and by $z_2$ is an isometry on $H^2(\D^2)$, meaning that the orthogonality conditions along each row of Figure \ref{fig:triangle} reduce to a condition for the main diagonal, where  orthogonal polynomials can be recovered from the optimal approximants to $1/(1-z_1z_2)$.
		\end{proof}

\section{Zero sets and the Shanks Conjecture}\label{sShanks}
	\subsection{Zero sets and the Shanks conjecture}\label{sZerosShanks}
		We turn to a discussion of zero sets of optimal approximants in several variables. It is natural to ask whether optimal approximants in $\mathcal{H}(\Omega)$ are zero-free in $\Omega$. A variation of this question is whether the assumption that $f(z)\neq 0$ for $z\in \Omega$ implies that the optimal approximants to $1/f$ inherit the zero-free property.

		The classical theory of orthogonal polynomials for $L^2$ can be used to show that optimal approximants in $H^2(\D)$ are zero-free on the closed unit disk $\overline{\D}$ for an arbitrary target function $f$: this problem was addressed by Chui in \cite{Chui80}. In \cite{JLMS16}, an analogous result was established for Dirichlet-type spaces $D_{\alpha}$ for $\alpha\geq 0$: 
		 if $f\in D_{\alpha}$, $f(0)\neq 0$, then $\spn(z)\neq 0$ for all $z\in \overline{\D}$. By contrast, when $\alpha<0$, there are functions $f\in D_{\alpha}$ whose optimal approximants vanish  inside $\mathbb{D}$; in fact, this can happen even for cyclic $f$, which in particular means that $f(z)\neq 0$ in $\D$. However, the zero sets $\mathcal{Z}(\spn)$ always omit a disk $D(0,r(\alpha))$ whose radius is strictly smaller than $1$: it was shown in \cite{JLMS16} that this statement holds with $r(\alpha)=2^{\alpha/2}$. This was sharpened in the subsequent paper \cite{JentZeros19}, and a sharp estimate on $f(\alpha)$ was given for the Hardy space $H^2(\D)$ and the Bergman space $A^2(\D)$.
		 
		As was explained in Section \ref{filtersub}, non-vanishing of optimal approximants has ramifications for filter design, and zero set problems for optimal approximants in $H^2(\D^2)$ have been investigated since the early 70s. In their 1972 paper \cite{Shanks72},  Shanks, Treitel, and Justice conjectured that optimal approximants to $1/f$ for any polynomial $f$ would be zero-free in the bidisk: in subsequent papers in the engineering community, this became known as the {\it Shanks conjecture}.

		A few years later, this strong version of the Shanks conjecture was disproved. In \cite{GKcounter75}, Genin and Kamp exhibited a counterexample, and in \cite{GKstab77}, a method to construct polynomials yielding optimal approximants with zeros in the bidisk was presented. For completeness, we present a simplified version of their counterexample.
		\begin{ex}[Genin-Kamp, 1975]
			Let 
			\begin{multline}
				f(z_1,z_2)=1 -  z_{1} -  z_{2} -  z_{1}^{2} + 4 z_{1} z_{2} -  z_{2}^{2} + 2 z_{1}^{3} - 2 z_{1}^{2} z_{2} - 2 z_{1} z_{2}^{2} \\+ 2 z_{2}^{3} -  z_{1}^{3} z_{2} + 4 z_{1}^{2} z_{2}^{2} -  z_{1} z_{2}^{3} -  z_{1}^{3} z_{2}^{2} -  z_{1}^{2} z_{2}^{3}.
			\end{multline}
			
			For this polynomial, we have the optimal approximant
			\[p_2^*(z_1,z_2)=\frac{39}{1165} + \frac{23}{1165} z_{1} + \frac{23}{1165} z_{2}\]
			which vanishes in the bidisk, for instance at $(z_1,z_2)=\left(\frac{9}{10}e^{3i},\,-\frac{9}{10}e^{3i} - \frac{39}{23}\right)$. Note that the original function $f$ also has zeros in the bidisk.
		\end{ex}

		After the full Shanks conjecture had been disproved, efforts were made to prove a weaker versions of the Shanks conjecture where non-vanishing of optimal approximants in the bidisk is supposed to follow from additional assumptions on $f$. For instance, Delsarte, Genin, and Kamp state a  ``weakest form of Shanks' conjecture'' in \cite{DGKasym80} where non-vanishing of the target polynomial $f$ on the closed bidisk $\overline{\D^2}$ would guarantee that the optimal approximants to $1/f$ are zero-free in $\D^2$. An intermediate version might be to ask that the polynomial $f$ be cyclic in $H^2(\D^2)$ in order to ensure that the optimal approximants $\spn$ have no zeros in $\D^2$; this, as shown in \cite{NGN70}, is equivalent to asking that $f$ itself have no zeros in the {\it open} bidisk.	

		The paper \cite{RedRedSwa84} claimed to establish the weak Shanks conjecture of \cite{DGKasym80}, but in \cite{DGKdisproof85}, Delsarte, Genin, and Kamp show that this purported proof fails.  As far as the authors are aware, the weak Shanks conjecture remains open for the Hardy space of the bidisk:
		\begin{conjecture}[Weakest form of the Shanks conjecture]
			Suppose $f\in \C[z_1,z_2]$ satisfies $f(z)\neq 0$ for $z\in \overline{\D^2}$. Then the $H^2(\D^2)$-optimal approximants to $1/f$ are zero-free in $\D^2$.
		\end{conjecture}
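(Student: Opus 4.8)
The plan is to reduce the non-vanishing of $\spn$ to a statement about reproducing kernels of the weighted space, and then to import, as far as possible, the one-variable orthogonal-polynomial machinery underlying the corresponding theorem of Chui in $H^2(\D)$. First I would record the reproducing-kernel form of the optimal approximant. Because the monomials are orthogonal in $H^2(\D^2)$, the optimality condition $\langle \spn f - 1, pf\rangle_{H^2}=0$ for $p\in\Pn$ rewrites, in the weighted inner product $\langle\cdot,\cdot\rangle_{H^2,f}$, as $\langle \spn, p\rangle_{H^2,f}=\langle 1, pf\rangle_{H^2}=\overline{f(0)}\,\overline{p(0)}$ for all $p\in\Pn$. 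Letting $K_n(z,w)$ denote the reproducing kernel of $\Pn$ inside the weighted space, the reproducing property gives $\langle K_n(\cdot,0),p\rangle_{H^2,f}=\overline{p(0)}$, and hence by uniqueness $\spn(z)=\overline{f(0)}\,K_n(z,0)$. Here $f(0)\neq 0$ is guaranteed since $f$ is zero-free on $\overline{\D^2}$. Thus the conjecture is \emph{equivalent} to showing that $K_n(\cdot,0)$ is zero-free in $\D^2$ for every $n$.

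Second, I would exploit the hypothesis. The condition $f\neq 0$ on $\overline{\D^2}$ forces the weight $w=\abs{f}^2$ to extend continuously to $\T^2$ with $0<c\le w\le C<\infty$, so the weighted measure is boundedly equivalent to normalized Lebesgue measure on $\T^2$; in particular $1/f\in H^\infty(\D^2)$ and $f$ is cyclic. In the one-variable Hardy space this is exactly the setting in which $\spn$ is a scalar multiple of a reversed orthogonal polynomial $\varphi_{n+1}^{*}$, whose zeros are the reflections of the zeros of $\varphi_{n+1}$; Fej\'er's theorem places the latter inside $\D$, so $\varphi_{n+1}^{*}$, and hence $\spn$, is zero-free on $\overline{\D}$. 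The heart of the matter is therefore to establish a two-variable analogue of the assertion that the kernel $K_n(\cdot,0)$ of a bounded, boundedly-invertible weight on $\T^2$ is zero-free in the open bidisk.

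Here is where I expect the real difficulty, and indeed why the statement remains a conjecture. The one-variable argument rests on two pillars with no clean bivariate counterpart: the Christoffel--Darboux formula, which exhibits $K_n(\cdot,0)$ as a single reversed polynomial, and Fej\'er's localization of the zeros of the orthogonal polynomials inside $\D$. In two variables the polynomials attached to the degree-lexicographic exhaustion $\Pn$ satisfy no scalar Szeg\H{o}-type recursion, the reversal $z^k\mapsto z^{n-k}$ is not canonical, and zero sets are codimension-one varieties that cannot be counted by a winding number. My plan would therefore be to invoke the Geronimo--Woerdeman theory of \emph{stable} two-variable polynomials together with its bivariate Fej\'er--Riesz and Christoffel--Darboux representations, using the stability of $f$ (its non-vanishing on $\overline{\D^2}$) to represent $K_n(\cdot,0)$ as a ``reverse-stable'' object whose zeros are thereby pushed out of $\D^2$. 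I would expect such a transfer to succeed only for orderings adapted to the half-plane/lexicographic structure exploited in that theory, and the genuine obstacle is that a general $\Pn$-kernel need not inherit the required stability; closing this gap is precisely the open content of the conjecture.

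Finally, a complementary and purely analytic line of attack is worth recording, even though it is insufficient on its own. Since $1/f$ is bounded away from zero on $\overline{\D^2}$ and $\spn\to 1/f$ in $H^2(\D^2)$, hence locally uniformly on $\D^2$, the approximants $\spn$ are eventually zero-free on every compact subset of $\D^2$ by a Hurwitz-type argument. The defect is that $H^2$-convergence yields no control up to $\T^2$, so for each finite $n$ zeros may lurk in a boundary layer near $\T^2$; bridging this gap, presumably through a quantitative Szeg\H{o}-type lower bound on the weighted kernel $K_n(\cdot,0)$ near the distinguished boundary, is exactly what the reproducing-kernel reduction of the first step is meant to supply.
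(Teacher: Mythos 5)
You should first be aware that the statement you were asked to prove is labelled a \emph{conjecture} in the paper, and the paper offers no proof of it: the authors state explicitly that the weakest form of the Shanks conjecture remains open for $H^2(\D^2)$, and they recall that the one published claim of a proof \cite{RedRedSwa84} was later shown to be erroneous \cite{DGKdisproof85}. So there is no proof in the paper to compare yours against; the only acceptable outcomes are a genuinely new proof or an acknowledged incomplete attempt, and yours is the latter (to your credit, you say so). Your partial reductions are sound: the identity $\spn=\overline{f(0)}\,K_n(\cdot,0)$, where $K_n$ is the reproducing kernel of $\Pn$ in the weighted space, is correct and is equivalent to the paper's identity $\mathfrak{K}_n(z,0)=\spn(z)f(z)$ for the kernel of $f\cdot\Pn$; and the Hurwitz-type argument from $\spn\to 1/f$ locally uniformly is valid but, as you note, only gives zero-freeness on each fixed compact subset of $\D^2$ for all sufficiently large $n$, not for every $n$ on all of $\D^2$.

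The missing step is the entire core of the problem, and the paper supplies a concrete reason why your space-agnostic steps cannot possibly suffice on their own. Example \ref{exBergmanCounter} exhibits a polynomial $\tilde{b}$ with no zeros in the \emph{closed} bidisk whose second $A^2(\D^2)$-optimal approximant vanishes inside $\D^2$ (and the same phenomenon occurs in $\mathfrak{D}_{\alpha,\alpha}$ for $\alpha=-0.85$ and for $(\alpha_1,\alpha_2)=(0,-3)$). Your kernel reduction and your Hurwitz argument hold verbatim in the Bergman space, where the conclusion is false; consequently any successful proof must inject structure that distinguishes $H^2$ --- presumably, as you suggest, that the weight $\abs{f}^2$ lives on the distinguished boundary $\T^2$ and is bounded above and below there, feeding into Geronimo--Woerdeman-type stability and Fej\'er--Riesz theory --- and must do so in a way that controls $K_n(\cdot,0)$ for \emph{every} $n$ rather than asymptotically. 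Whether the degree-lexicographic subspaces $\Pn$ (as opposed to the half-plane orderings native to that theory) admit a Christoffel--Darboux structure forcing stability of the kernel is exactly what is unresolved; your proposal correctly identifies this obstruction but does not overcome it, so the statement remains, as in the paper, an open conjecture.
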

		We have not been able to settle the Shanks conjecture in its weakest form in $H^2$. However, we now demonstrate that it fails in other function spaces of the bidisk, including the Bergman space $A^2(\D^2)$.

	 	\begin{ex}[Counterexample to the Shanks conjecture for the Bergman space]\label{exBergmanCounter}
			Consider the irreducible polynomial
			\begin{equation} \label{eq:a2Zeros}
			 	b(z_1,z_2)=-4+3z_1-z_1^2+3z_2-2z_1z_2+z_1^2z_2-z_2^2+z_1z_2^2.
			\end{equation}
	    This polynomial is the denominator of a rational inner function in the bidisk constructed in \cite{BPSprep}, and hence it follows that $b$ has no zeros in the bidisk and, in particular, is a cyclic vector in the Bergman space $A^2(\D)$, viz. \cite{TAMS16}. However, $b$ does have a single boundary zero at $(1,1)\in \mathbb{T}^2$.

			The second non-constant optimal approximant to $1/b$ can be computed,
			\begin{equation}\label{eq:A2counter}
				p_{ 2 } ^*= \frac{4}{835} \left(-\frac{1267}{27} - 24z_{1} -  24z_{2}\right),	
			\end{equation}
			and has zeros inside the bidisk.
			\begin{figure}[h!]
				\centering
				\includegraphics[width=.5\textwidth]{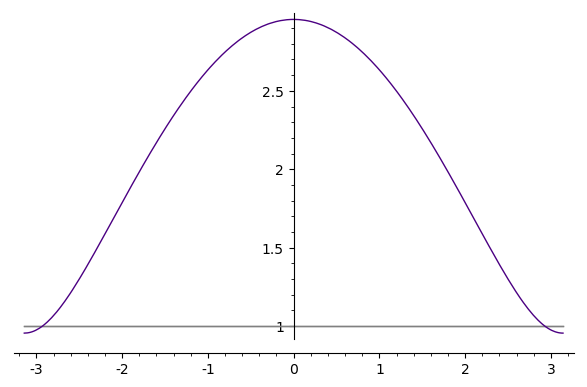}
				\caption{\small Solving \eqref{eq:A2counter} $p_2^*(z_1,e^{it})=0$ for $z_1$ and plotting $\abs{z_1}$ against $t\in(0,2\pi)$. Note that $p_2^*$ is symmetric in $z_1$ and $z_2$.}
			\end{figure}

			We now dilate $b$ to $$\tilde{b}(z_1,z_2)=b\left(\frac{99}{100}z_1,\frac{99}{100}z_2\right).$$ 
			Note that (as can be seen in Figure \ref{fig:dzeros}) the zeros of $\tilde{b}$ are now strictly outside the closed bidisk.

			\begin{figure}[h!]
			    \centering
			    \begin{subfigure}[b]{0.4\textwidth}
			        \includegraphics[width=\textwidth]{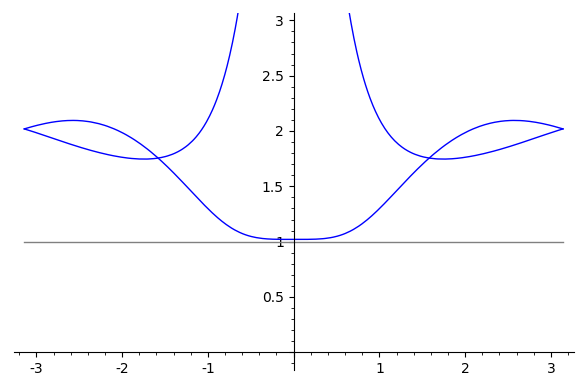}
			        \label{fig:dzerosz1}
			        \caption{\small Solving $\tilde{b}(z_1,e^{it})=0$ for $z_1$ and plotting $\abs{z_1}$ against $t\in(0,2\pi)$}
			    \end{subfigure}
			    \qquad
			    \begin{subfigure}[b]{0.4\textwidth}
			        \includegraphics[width=\textwidth]{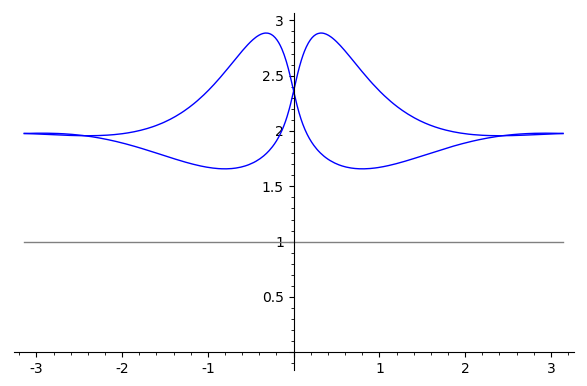}
			        \label{fig:dzerosz2}
			        \caption{\small Solving $\tilde{b}(e^{it},z_2)=0$ for $z_2$ and plotting $\abs{z_2}$ against $t\in(0,2\pi)$.}
			    \end{subfigure}
			    \caption{\small Facial zero sets of $\tilde{b}$}\label{fig:dzeros}
			\end{figure}

			\begin{figure}[h!]				
				\centering
				\includegraphics[width=.5\textwidth]{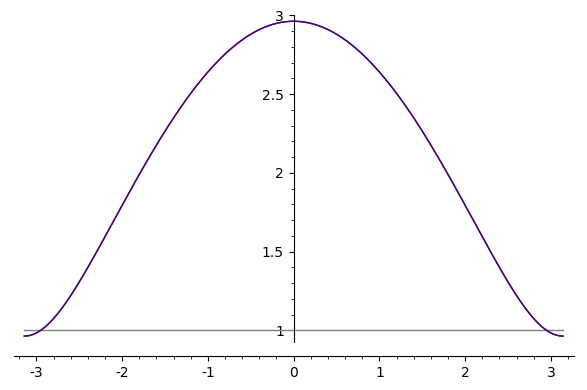}
				\caption{\small Solving $p_2^*(z_1,e^{it})=0$ for $z_1$ and plotting $\abs{z_1}$ against $t\in(0,2\pi)$. Note that $p_2^*$ is symmetric in $z_1$ and $z_2$.}\label{fig:dOAzeros}
			\end{figure}

			However, the optimal approximant
			\begin{equation}
				p_2^*=\frac{11316790431936000000000000}{98483870117907418000870963}
							\brkt{-\frac{5554782671089}{2829197607984} -  z_{1} -  z_{2}	}
			\end{equation}
			has zeros inside the closed bidisk, as seen in Figure \ref{fig:dOAzeros}.				
		\end{ex}
		\begin{rmks}
			The same function $b$ produces optimal approximants which vanish in the bidisk for $\alpha_1=\alpha_2=-.85$. Similarly, choosing $\alpha_1=0,\, \alpha_2=-3$, $b$ also yields zeros in the bidisk for $p^{\ast}_2$. 
		\end{rmks}

	\subsection{Reproducing kernel methods}
		One faces several difficulties when seeking to extend the results of \cite{Chui80} and \cite{JLMS16,  JentZeros19} on the location of zero set of optimal approximants to function spaces in several variables. Zeros of a polynomial, or indeed any holomorphic function of several complex variables, are never isolated, and we no longer have access to the fundamental theorem of algebra. We briefly revisit the reproducing kernel arguments in \cite{JLMS16} in the multi-variable setting to see how these facts block a straight-forward extension of the proof.

		Let  $\spn$ be an optimal approximant to $1/f$ in $\cd_{\alpha_1,\alpha_2}$. Then, as is explained in Section \ref{ss:RKHS}, we have
			\[\mathfrak{K}_n(z,0)=\spn(z)f(z),\]
		where $\mathfrak{K}_n(\cdot,0)$ is the reproducing kernel at $0$ for $f\cdot \Pn$. Suppose for a moment that $\spn$ is of the form
			\[\spn(z_1,z_2)=(P(z_1,z_2)-w_0)Q(z_1,z_2)\]
		for some $w_0\in \C$, some $P\in \C[z_1,z_2]$ vanishing at the origin, and some $Q \in \C[z_1,z_2]$. We seek to determine some set $K\subset \Omega$ such that $w_0-P(z)\neq 0$ for $z\in K$. 
As in \cite[Section 4]{JLMS16}, we can write
			\[w_0Q(z)f(z)=P(z)Q(z)f(z)-\mathfrak{K}_n(z,0)\]
		and since $PQf$ vanishes at the origin and is an element of $f\cdot \Pn$, we get $PQf\perp \mathfrak{K}_n(\cdot,0)$ by appealing to the reproducing property of $\mathfrak{K}_n(z,0)$. This in turn implies that
			\[|w_0|^2\|Qf\|^2_{\mathcal{H}}=\|PQf\|^2_ {\mathcal{H}}+\|\mathfrak{K}_n(\cdot,0)\|_{\mathcal{H}}^2.\]
		Since $\|\mathfrak{K}_n(\cdot, 0)\|\geq 0$, it follows that
			\begin{equation}
|w_0|^2\|Qf\|^2-\|PQf\|^2\geq 0.
\label{eq:zeroout}
\end{equation}
		Up to this point, the argument is identical to that in \cite{JLMS16}. Now, in one variable, the assumption that $w_0\in\C$ is a zero of $\spn$ allows us to take $P(z)=z$. In many function spaces of interest, such as the Dirichlet spaces, one has $\|zf\|\geq C(\mathcal{H})\|f\|$ for some easily computable constant $C(\mathcal{H})$, and this allows us to conclude that from \eqref{eq:zeroout} that $|w_0|^2-C(\mathcal{H})^2\geq 0$, thus obtaining a lower bound on the location of zeros of the one-variable polynomial $\spn(z)$.

		In several variables, there is no distinguished form of $P$ and even if we restrict ourselves to some prescribed factor $P$, we are left with the task of estimating the ratio $\|PQf\|/\|Qf\|$ from below, and this does not seem like an easy task. Finally, assuming a lower bound on $|w_0|$ is obtained in this way, we would in addition need to analyze whether this lower bound places $w_0$ outside the range of $P(z)$ on some subset $\D^2$.

		We do obtain the following, again by leveraging one-variable arguments.
		\begin{lemma}
		Let $\spn$ be an optimal approximant to $1/f$ in $\cd_{\alpha_1,\alpha_2}$ and suppose $\spn(z)=(w_0-z_1z_2)Q(z)$ for some $Q\in \C[z_1,z_2]$ with $Q(z)\neq 0$ for $z\in \D^2$. 

		If $\alpha_1\geq 0$ and $\alpha_2\geq 0$, then $\spn$ does not vanish in the bidisk. If $\alpha_1<0$ and $\alpha_2<0$, then $\spn$ does not vanish in $D(0,2^{(\alpha_1+\alpha_2)/2})\times D(0,2^{(\alpha_1+\alpha_2)/2})$. 
		\end{lemma}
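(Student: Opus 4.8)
The plan is to specialize the reproducing kernel computation carried out just above the statement to the factor $P(z)=z_1z_2$, which vanishes at the origin, and then extract a sharp lower bound on $\abs{w_0}$ from the inequality \eqref{eq:zeroout}. Writing $\spn=(w_0-z_1z_2)Q=-(z_1z_2-w_0)Q$ places us exactly in the earlier framework with $P=z_1z_2$ and $Q$ replaced by $-Q$ (which is likewise nonvanishing on $\D^2$), so \eqref{eq:zeroout} becomes
\[
\abs{w_0}^2\norm{Qf}_{\alpha_1,\alpha_2}^2\geq \norm{z_1z_2\,Qf}_{\alpha_1,\alpha_2}^2.
\]
Everything therefore reduces to bounding the diagonal multiplier norm $\norm{z_1z_2\,g}_{\alpha_1,\alpha_2}/\norm{g}_{\alpha_1,\alpha_2}$ from below, where $g=Qf$. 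This is the two-variable analog of the one-variable estimate $\norm{zf}\geq C(\mathcal{H})\norm{f}$ invoked in the excerpt, and the point is that multiplication by the diagonal monomial $z_1z_2$ is just as tractable as the one-variable shift.

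For the multiplier estimate I would work directly with Taylor coefficients. If $g=\sum_{j,k}b_{j,k}z_1^jz_2^k$, then $z_1z_2\,g=\sum_{j,k}b_{j,k}z_1^{j+1}z_2^{k+1}$, so by \eqref{Dnorm2}
\[
\frac{\norm{z_1z_2\,g}_{\alpha_1,\alpha_2}^2}{\norm{g}_{\alpha_1,\alpha_2}^2}
=\frac{\sum_{j,k}(j+2)^{\alpha_1}(k+2)^{\alpha_2}\abs{b_{j,k}}^2}
{\sum_{j,k}(j+1)^{\alpha_1}(k+1)^{\alpha_2}\abs{b_{j,k}}^2}
\geq \inf_{j,k\geq 0}\brkt{\frac{j+2}{j+1}}^{\alpha_1}\brkt{\frac{k+2}{k+1}}^{\alpha_2}.
\]
The infimum factorizes, so the whole task is the elementary one of minimizing $t\mapsto \brkt{\tfrac{t+2}{t+1}}^{\alpha}$ over integers $t\geq 0$. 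Since $\tfrac{t+2}{t+1}$ decreases monotonically from $2$ (at $t=0$) to $1$, when $\alpha\geq 0$ every factor is $\geq 1$, whereas when $\alpha<0$ every factor is minimized at $t=0$ and equals $2^{\alpha}$ there. This elementary monotonicity analysis is the only real computational content of the proof; there is no genuine obstacle, because fixing $P=z_1z_2$ sidesteps precisely the difficulty flagged earlier of estimating $\norm{PQf}/\norm{Qf}$ for an unstructured factor $P$.

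This yields the two regimes. When $\alpha_1,\alpha_2\geq 0$ we obtain $\norm{z_1z_2\,g}_{\alpha_1,\alpha_2}\geq\norm{g}_{\alpha_1,\alpha_2}$, hence $\abs{w_0}\geq 1$; when $\alpha_1,\alpha_2<0$ we obtain $\norm{z_1z_2\,g}_{\alpha_1,\alpha_2}\geq 2^{(\alpha_1+\alpha_2)/2}\norm{g}_{\alpha_1,\alpha_2}$, hence $\abs{w_0}\geq 2^{(\alpha_1+\alpha_2)/2}$. To finish, I note that the factor $w_0-z_1z_2$ vanishes only where $z_1z_2=w_0$. In the first case $\abs{z_1z_2}<1\leq\abs{w_0}$ throughout $\D^2$, while in the second case, on the polydisk $D(0,2^{(\alpha_1+\alpha_2)/2})\times D(0,2^{(\alpha_1+\alpha_2)/2})$ one has $\abs{z_1z_2}<2^{\alpha_1+\alpha_2}<2^{(\alpha_1+\alpha_2)/2}\leq\abs{w_0}$, where the middle inequality uses $\alpha_1+\alpha_2<0$. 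In either case $w_0-z_1z_2\neq 0$ on the relevant set, and since $Q\neq 0$ there by hypothesis, the product $\spn=(w_0-z_1z_2)Q$ is zero-free, completing the argument.
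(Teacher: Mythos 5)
Your proof is correct and follows essentially the route the paper intends: the lemma is stated immediately after the derivation of \eqref{eq:zeroout}, and your specialization to $P=z_1z_2$ together with the coefficient-wise multiplier bound $\norm{z_1z_2\,g}_{\alpha_1,\alpha_2}^2\geq \inf_{j,k}\brkt{\tfrac{j+2}{j+1}}^{\alpha_1}\brkt{\tfrac{k+2}{k+1}}^{\alpha_2}\norm{g}_{\alpha_1,\alpha_2}^2$ (equal to $1$ when $\alpha_1,\alpha_2\geq 0$ and to $2^{\alpha_1+\alpha_2}$, attained at $j=k=0$, when $\alpha_1,\alpha_2<0$) is precisely the ``one-variable argument'' the paper says it is leveraging. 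Your closing observation that on the stated polydisk $\abs{z_1z_2}<2^{\alpha_1+\alpha_2}<2^{(\alpha_1+\alpha_2)/2}\leq\abs{w_0}$ correctly finishes the negative case, and in fact shows the zero-free region is slightly larger than the lemma claims.
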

		One can imagine variations of the above argument for other special factors such as $P(z_1,z_2)=z_1$, but it would clearly be desirable to find a general methods for analyzing zero sets of optimal approximants in several variables.

		\begin{qst}
		Let $\{\spn\}$ be optimal approximants to $f\in\da\setminus\{0\}$. Is there a compact set $K\subset \D^2$ such that $\spn(z)\neq 0$ for $z\in K$ and all $n$? 

Similarly, if $\{\spn\}$ are optimal approximants to $f\in H^2_d$, is there a compact set $K\subset \B^d$ such that $\spn(z)\neq0$ for $z\in K$ and all $n$? 
		\end{qst}

\section{Explicit computations for  $f=1-a(z_1+z_2)$}\label{sComp}	
In this section, we record some observations concerning optimal approximants and orthogonal polynomials associated with a polynomial that vanishes at a single boundary point. More precisely, we consider $f=1-a(z_1+z_2)$ which can be viewed as a natural analog of the classical one-variable weight $1-z$. In the case of the Drury-Arveson space $H^2_2$ in 
$\mathbb{B}^2$, we take $a=\frac{1}{\sqrt{2}}$, and are able to exhibit closed formulas for some of the optimal approximants. Then, we turn to the bidisk, set $a=1$, compute some low-degree optimal approximants and orthogonal polynomials for Dirichlet-type spaces, and note that the situation is more complicated. This gives an example where the ball and bidisk theories are different. In Section \ref{sRKHS}, we were able to use a diagonal embedding to handle both $\mathbb{B}^2$ and $\mathbb{D}^2$ but here, we exploit the fact that the ball, unlike the bidisk, is invariant under unitary transformations. 

Throughout, we use degree lexicographical ordering, as in Section \ref{ssOA_def}.

	\subsection{Optimal approximants and orthogonal polynomials for $H^2_2$}
Consider $f(z_1,z_2)=1-\frac{1}{\sqrt{2}}(z_1+z_2)$, which vanishes at $(\frac{1}{\sqrt{2}},\frac{1}{\sqrt{2}})$ in the unit sphere $\mathbb{S}^2$. 
Using the Grammian method described in Section \ref{ssGrammian}, we compute the first optimal approximants for $1/f$:
		\begin{align*}
			p^*_{ 0 } &=  \frac{1}{2}\\
			p^*_{ 1 }&=  \frac{1}{12} \left(7+ 2 \sqrt{2} z_{1}\right)\\
			p^*_{ 2 }&=  \frac{1}{6} \left(4+  \sqrt{2} z_{1} + \sqrt{2} z_{2}\right)\\
			p^*_{ 3 }&=  \frac{1}{48} \left(33+ 10\sqrt{2} z_{1} + 8\sqrt{2} z_{2} + 6 z_{1}^{2}\right)\\
			p^*_{ 4 }&=  \frac{1}{48} \left(35+ 12\sqrt{2} z_{1} + 10\sqrt{2} z_{2} + 6z_{1}^{2} + 12z_{1} z_{2}\right)\\
			p^*_{ 5 }&=  \frac{1}{8} \left(6+ 2\sqrt{2} z_{1} + 2\sqrt{2} z_{2} + z_{1}^{2} + 2z_{1} z_{2} + z_{2}^{2}\right).
		\end{align*}
From these, we can compute orthogonal polynomials in the weighted space as discussed in Section \ref{ssOAsAndOGs}:
		\begin{align*}
			\phi_{ 0 }&=  1 \\
			\phi_{ 1 }&=  \frac{1}{12}\left(1 + 2\sqrt{2} z_{1}\right) \\
			\phi_{ 2 }&=  \frac{1}{12}\left(1 + 2\sqrt{2} z_{2}\right) \\
			\phi_{ 3 }&=  \frac{1}{48}\left(1 + 2\sqrt{2} z_{1} + 3z_{1}^{2} \right)\\
			\phi_{ 4 }&=  \frac{1}{24}\left(1 + \sqrt{2} z_{1} + \sqrt{2} z_{2} + 6 z_{1} z_{2}\right) \\
			\phi_{ 5 }&=  \frac{1}{48}\left(1 + 2\sqrt{2} z_{2} + 3z_{2}^{2} \right)\\
			\phi_{ 6 }&=  \frac{1}{160}\left(1 + 2\sqrt{2} z_{1} + 6 z_{1}^{2} + 8 \sqrt{2} z_{1}^{3}\right) \\
			\phi_{ 7 }&=  \frac{3}{160} + \frac{1}{40} \sqrt{2} z_{1} + \frac{1}{80} \sqrt{2} z_{2} + \frac{3}{80} z_{1}^{2} + \frac{3}{40} z_{1} z_{2} + \frac{3}{20} \sqrt{2} z_{1}^{2} z_{2} \\
			\phi_{ 8 }&=  \frac{3}{160} + \frac{1}{80} \sqrt{2} z_{1} + \frac{1}{40} \sqrt{2} z_{2} + \frac{3}{40} z_{1} z_{2} + \frac{3}{80} z_{2}^{2} + \frac{3}{20} \sqrt{2} z_{1} z_{2}^{2} \\
			\phi_{ 9 }&=  \frac{1}{160} + \frac{1}{80} \sqrt{2} z_{2} + \frac{3}{80} z_{2}^{2} + \frac{1}{20} \sqrt{2} z_{2}^{3}. 
		\end{align*}
The appearances of $p_2^*$ and $p_5^*$ are easy to explain. 
\begin{proposition}\label{unitarytrick}
Let $N\in \mathbb{N}$ be such that $\mathcal{P}_N$ contains all two-variable monomials of total degree $n$, and no monomials of total degree $n+1$. 

Then the $N$th optimal approximant to $1/(1-\frac{1}{\sqrt{2}}(z_1+z_2))$ is given by
\[p_N^*(z_1,z_2)=r_N\left(\frac{z_1+z_2}{\sqrt{2}}\right),\]
where $r_n(x)=\frac{1}{n+1}\sum_{k=0}^n(k+1)z^k$.
\end{proposition}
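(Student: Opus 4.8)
The plan is to prove this ``unitary trick'' by reducing the two-variable extremal problem to a one-variable one, exploiting the unitary invariance of the Drury--Arveson norm recorded in Section \ref{ss:RKHS}. Write $w=(\tfrac{1}{\sqrt2},\tfrac{1}{\sqrt2})\in\mathbb{S}^2$, so that $f(z)=1-\ip{z,w}$. Since $w$ is a unit vector, there is a unitary $V$ on $\C^2$ with $Ve_1=w$; let $C_V\colon h\mapsto h\circ V$ be the associated composition operator. Because the $H^2_2$ norm is unitary invariant, $C_V$ is a unitary operator on $H^2_2$, and it fixes the constant function $1$. Moreover $(f\circ V)(z)=1-\ip{Vz,w}=1-\ip{Vz,Ve_1}=1-z_1$, so $C_V$ conjugates the target $f$ into the single-variable function $g(z)=1-z_1$.

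Next I would transfer the extremal problem. For any polynomial $p$ we have $\norm{pf-1}=\norm{C_V(pf-1)}=\norm{(p\circ V)g-1}$, using $C_V1=1$ and the fact that composition is an algebra homomorphism. As $p$ runs over $\mathcal{P}_N$, the image $p\circ V=C_Vp$ runs over $C_V\mathcal{P}_N$. Here is the crucial point, and the main thing to check: under the hypothesis on $N$ the space $\mathcal{P}_N$ is exactly the space of all polynomials of total degree at most $n$, and a linear change of variables maps such a space onto itself, so $C_V\mathcal{P}_N=\mathcal{P}_N$. This is precisely why the hypothesis requires $\mathcal{P}_N$ to contain all monomials of degree $n$ and none of degree $n+1$: a merely partial degree level would not be invariant under $V$, and the argument would collapse. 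Consequently the minimizer $p_N^*$ for $f$ and the minimizer $q_N^*$ for $g=1-z_1$ are related by $q_N^*=p_N^*\circ V$, i.e.\ $p_N^*=q_N^*\circ V^{-1}$.

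It then remains to compute $q_N^*$, the optimal approximant to $1/(1-z_1)$ relative to $\mathcal{P}_N$. I would decompose $H^2_2=\bigoplus_{m\ge0}\mathcal{M}_m$ according to the degree in $z_2$, where $\mathcal{M}_m=\overline{\mathrm{span}}\{z_1^jz_2^m\colon j\ge0\}$; these subspaces are mutually orthogonal because the monomials are orthogonal in $H^2_2$. Multiplication by $1-z_1$ preserves each $\mathcal{M}_m$, so $(1-z_1)\mathcal{P}_N$ splits as an orthogonal direct sum over $m$. Since the target $1$ lies in $\mathcal{M}_0$, its projection onto $(1-z_1)\mathcal{P}_N$ equals its projection onto the $m=0$ summand, namely $(1-z_1)\cdot\{\text{polynomials in }z_1\text{ of degree}\le n\}$. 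On $\mathcal{M}_0$ the $H^2_2$ norm reduces to $\sum_k|a_k|^2$, the Hardy norm of the disk, so this is exactly the one-variable extremal problem, and by Example \ref{FMSexpoly} with weights $\omega\equiv1$ its solution is $q_N^*(z)=r_n(z_1)$, the degree-$n$ one-variable optimal approximant to $1/(1-z)$ (here $n$ is the common total degree from the hypothesis).

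Finally I would undo the change of variables. Since $V^{-1}=V^*$, we have $(V^{-1}z)_1=\ip{V^{-1}z,e_1}=\ip{z,Ve_1}=\ip{z,w}=\tfrac{1}{\sqrt2}(z_1+z_2)$, whence $p_N^*(z_1,z_2)=q_N^*(V^{-1}z)=r_n\big(\tfrac{z_1+z_2}{\sqrt2}\big)$, as claimed. I expect the main obstacle to be the middle step: establishing the rotation invariance $C_V\mathcal{P}_N=\mathcal{P}_N$ and the resulting covariance $p_N^*=q_N^*\circ V^{-1}$ of the optimal-approximant construction. Once that is secured, the identification of $q_N^*$ is a routine one-variable reduction via the $z_2$-degree splitting and the isometry of $\mathcal{M}_0$ with $H^2(\D)$.
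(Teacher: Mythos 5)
Your proof is correct and is essentially the paper's own argument: both rest on unitary invariance of the $H^2_2$ norm to replace $1-\tfrac{1}{\sqrt{2}}(z_1+z_2)$ by $1-z_1$, then use orthogonality of monomials to discard all $z_2$-dependence and reduce to the known one-variable $H^2(\D)$ optimal approximant to $1/(1-z)$. The only stylistic difference is that the paper argues via an arbitrary competitor $p=p_N^*+Q_2$ and a chain of norm inequalities with equality precisely when $Q_2=0$, whereas you transfer the whole extremal problem (and its unique minimizer) using the invariance $C_V\mathcal{P}_N=\mathcal{P}_N$; the underlying ingredients are identical.
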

\begin{proof}
Let 
\[U=\frac{1}{\sqrt{2}}\left(\begin{array}{cc}1 & 1\\-1 & 1\end{array}\right)\in U_2(\mathbb{C})\]
act on $\mathbb{C}^2$ by left multiplication and note that
\[f=F\circ U,\]
where $F=1-z_1$. Now let $p\in \mathcal{P}_N$. Since $p^*_N$ defined above is in $\mathcal{P}_N$, we can write $p=p^*_N+(p-p^*_N)=Q_1+Q_2$, and using the invariance of the $H^2_2$-norm under unitaries, we obtain
\[\|p f-1\|_{H^2_2}=\|Q_1f-1+Q_2f\|_{H^2_2}=\|r_N F-1+(Q_2\circ U^{-1})\cdot F\|_{H^2_2};\]
note that $r_n$ and $F$ are one-variable functions. Since monomials are orthogonal in $H^2_2$, we obtain a lower bound by stripping out contributions that do not only depend on $z_1$:
\[\|pf-1\|_{H^2_2}\geq \|r_NF-1+(Q_2\circ U^{-1})(\cdot, 0)\cdot F\|_{H^2_2}.\]
Since $r_N$ is the $N$th order optimal approximant to $1/F$ in the Hardy space $H^2(\D)$, and since the $H^2_2$-norm restricted to functions of $z_1$ only reduces to the one-variable Hardy norm, the norm on the right is bounded below by $\|r_NF-1\|_{H^2}$. In the above argument, we have equality throughout provided $Q_2=0$, and the result now follows.
\end{proof}
As a corollary, we get from the one-variable results in \cite{JAM15,FMS14} that $f=1-\frac{1}{\sqrt{2}}(z_1+z_2)$ is cyclic in the Drury-Arveson space, with distance estimate
\[\nu_N(1-(z_1+z_2)/\sqrt{2}, H^2_2)\asymp \frac{1}{N+1}.\]
By contrast, in Example \ref{ex:DAdiag}, we noted that
\[\nu_{\oslash N}(1-\sqrt{2}z_1z_2, H^2_2)\asymp \frac{1}{\sqrt{N+1}}.\]
This seems to suggest that having a bigger boundary zero set may increase the optimal distance of a polynomial (cf. the discussion in \cite[Section 5]{PJM15}).

We leave a full determination of $H^2_2$-optimal approximants and orthogonal polynomials to future work.
	\subsection{Optimal approximants and orthogonal polynomials for $\da$}\label{ssCompBidisk}
We now turn to the bidisk and $f(z_1,z_2)=2-z_1-z_2$, and present some optimal approximants for $1/f$ in three Dirichlet-type spaces. 
		\begin{ex}[The Hardy Space, $\alpha=0$]
	Again using Grammians, we begin by computing some optimal approximants. Interestingly, while the $z_1^5$ coefficient in $p^*_{20}$ is negative, the $z_1^5$ coefficient is positive when that term first appears in $p^*_{15}$, and is first negative in $p^*_{17}$. 
			\begin{align*}
				p^*_{ 0 } &=  \frac{1}{3}\\
				p^*_{ 1 } &=  \frac{1}{8}\left(3 +  z_{1}\right)\\
				p^*_{ 2 } &=  \frac{1}{17} \left( 7+ 2z_{1} + 2z_{2}\right)\\
				p^*_{ 3 } &=  \frac{1}{223}\left( 93+ 30 z_{1} + 26 z_{2} + 10 z_{1}^{2}\right)\\
				p^*_{ 4 } &=  \frac{1}{2039} \left(897+ 342 z_{1} + 310 z_{2} + 80 z_{1}^{2} + 204 z_{1} z_{2}\right)\\
				p^*_{ 5 } &=  \frac{1}{205} \left(91+ 34 z_{1} + 34 z_{2} + 8 z_{1}^{2} + 20 z_{1} z_{2} + 8 z_{2}^{2}\right)\\
				\vdots &~		\\
				p^*_{ 20 }&=  0.4767094 + 0.2150641 z_{1} + 0.2150641 z_{2} + 0.08684609 z_{1}^{2} \\
				&\qquad+ 0.1891688 z_{1} z_{2} + 0.08684609 z_{2}^{2} + 0.02794644 z_{1}^{3} + 0.1121122 z_{1}^{2} z_{2} \\
				&\qquad+ 0.1121122 z_{1} z_{2}^{2} + 0.02794644 z_{2}^{3} + 0.005193106 z_{1}^{4} + 0.04785621 z_{1}^{3} z_{2}\\
				&\qquad + 0.08249469 z_{1}^{2} z_{2}^{2} + 0.04785621 z_{1} z_{2}^{3} + 0.005193106 z_{2}^{4} \\
				&\qquad+ \left(-0.0002349534\right) z_{1}^{5} + 0.01179593 z_{1}^{4} z_{2} + 0.03555798 z_{1}^{3} z_{2}^{2} \\
				&\qquad+ 0.03555798 z_{1}^{2} z_{2}^{3} + 0.01179593 z_{1} z_{2}^{4} + \left(-0.0002349534\right) z_{2}^{5}
			\end{align*}
			We next compute orthogonal polynomials in the corresponding weighted space, again as discussed in Section \ref{ssOAsAndOGs}.
			\begin{align*}
				\phi_{ 0 }&=  1 \\
				\phi_{ 1 }&=  \frac{1}{24}\left(1 + 3 z_{1}\right) \\
				\phi_{ 2 }&=  \frac{1}{136}\left(5 -  z_{1} + 16 z_{2}\right) \\
				\phi_{ 3 }&=  \frac{2}{3791}\left(10 + 32z_{1} - 2 z_{2} + 85 z_{1}^{2}\right) \\
				\phi_{ 4 }&=  \frac{6}{454697}\left(1734 + 2516 z_{1} + 2686 z_{2} - 425 z_{1}^{2} + 7582 z_{1} z_{2}\right) \\
				\phi_{ 5 }&=  \frac{4}{417995}\left(416 - 187z_{1} + 1444z_{2} - 22z_{1}^{2} - 260 z_{1} z_{2} + 4078z_{2}^{2} \right)
			\end{align*}
		\end{ex}
		\begin{ex}[The Dirichlet Space, $\alpha=1$]
			Again, we compute optimal approximants to $1/f$ where $f(z_1,z_2)=2-z_1-z_2$. The negative coefficients appear sooner here, with the $z_1^3$ term being negative first in $p^*_8$, although it is positive when it first appears in $p^*_6.$
			\begin{align*}
				p^*_{ 0 } &=  \frac{1}{4}\\
				p^*_{ 1 } &=  \frac{1}{52}\left(15 + 4 z_{1}\right)\\
				p^*_{ 2 } &=  \frac{1}{60}\left(19 + 4 z_{1} + 4 z_{2}\right)\\
				p^*_{ 3 } &=  \frac{1}{6324} \left(2029+ 484 z_{1} + 412 z_{2} + 132 z_{1}^{2}\right)\\
				p^*_{ 4 } &=  \frac{1}{60260}\left(20941 + 6092 z_{1} + 5660 z_{2} + 792 z_{1}^{2} + 3188 z_{1} z_{2}\right)\\
				p^*_{ 5 } &=  \frac{1}{1372} \left(479+ 136 z_{1} + 136 z_{2} + 18 z_{1}^{2} + 70z_{1} z_{2} + 18 z_{2}^{2}\right)\\
				\vdots &~		\\
				p^*_{ 9 }&=  0.368042 + 0.118042 z_{1} + 0.118042 z_{2} + 0.0242648 z_{1}^{2} + 0.0781293 z_{1} z_{2} \\
				&\qquad+ 0.0242648 z_{2}^{2} + \left(-0.0000894141\right) z_{1}^{3} + 0.0245889 z_{1}^{2} z_{2} \\
				&\qquad+ 0.0245889 z_{1} z_{2}^{2} + \left(-0.0000894141\right) z_{2}^{3}
			\end{align*}
			The first few corresponding orthogonal polynomials are
			\begin{align*}
				\phi_{ 0 }&=  1 \\
				\phi_{ 1 }&=  \frac{1}{26}\left(1 + 2 z_{1}\right) \\
				\phi_{ 2 }&=  \frac{1}{390}\left(11 - 4 z_{1} + 26 z_{2}\right) \\
				\phi_{ 3 }&=  \frac{1}{2635}\left(11 + 26z_{1} - 4 z_{2} + 55 z_{1}^{2} \right)\\
				\phi_{ 4 }&=  \frac{1}{23817765}\left(635209 + 584998z_{1} + 685420 z_{2} - 184107 z_{1}^{2} + 1260057 z_{1} z_{2} \right)\\
				\phi_{ 5 }&=  \frac{1}{10334590}\left(16686 - 20358 z_{1} + 53730 z_{2} - 243 z_{1}^{2} - 19467 z_{1} z_{2} + 135585 z_{2}^{2}\right).
			\end{align*}
		\end{ex}
		\begin{ex}[The Bergman Space, $\alpha=-1$]
			For $f(z_1,z_2)=2-z_1-z_2$, in the Bergman space, the first occurrence of a negative coefficient is for the $z_1^9$ coefficient in $p^*_{47}$, although similarly to the previous cases, the $z_1^9$ coefficient is positive when it first appears in $p^*_{45}$. In $p^*_{54}$ where the $z_2^9$ term first appears, its coefficient is also negative. We only present the first few optimal approximants for $1/f$, as the 47th or 54th polynomials are prohibitively long.
			\begin{align*}
				p^*_{ 0 } &=  \frac{2}{5}\\
				p^*_{ 1 } &=  \frac{1}{143}\left(62 + 24 z_{1}\right)\\
				p^*_{ 2 } &=  \frac{1}{73} \left(34+ 12 z_{1} + 12 z_{2}\right)\\
				p^*_{ 3 } &=  \frac{1}{9587}\left(4502 + 1764 z_{1} + 1572 z_{2} + 672 z_{1}^{2}\right)\\
				p^*_{ 4 } &=  \frac{1}{16211} \left(7802 +3450 z_{1} + 3138 z_{2} + 1092 z_{1}^{2} + 2334 z_{1} z_{2}\right)\\
				p^*_{ 5 } &=  \frac{1}{1547}\left(750 + 328 z_{1} + 328 z_{2} + 104 z_{1}^{2} + 220z_{1} z_{2} + 104 z_{2}^{2}\right)
			\end{align*}
			From these, we can find the orthogonal polynomials.
			\begin{align*}
				\phi_{ 0 }&=  1 \\
				\phi_{ 1 }&=  \frac{24}{715}\left(1 + 5z_{1}\right) \\
				\phi_{ 2 }&=  \frac{12}{10439}\left(28 - 3 z_{1} + 143 z_{2}\right) \\
				\phi_{ 3 }&=  \frac{2688}{699851}\left( + \frac{13728}{699851} z_{1} - \frac{288}{699851} z_{2} + \frac{672}{9587} z_{1}^{2}\right) \\
				\phi_{ 4 }&=  \frac{6}{155414857}\left(302642 + 746491z_{1} + 766719z_{2} - 70798 z_{1}^{2} + 3729343 z_{1} z_{2}\right) \\
				\phi_{ 5 }&=  \frac{2}{148393}\left(262 - 59 z_{1} + 1369 z_{2} - 10 z_{1}^{2} - 131 z_{1} z_{2} + 4988 z_{2}^{2}\right)
			\end{align*}
		\end{ex}
		\begin{rmks}[Negative coefficients]
			Even for this simple target function, for higher order approximants, some coefficients are negative. This is in contrast to the one variable situation with $f(z)=(1-z)^{a}$, $a\geq 0$ real, where coefficients of the optimal approximants can be found as positively weighted sums of the Taylor coefficients of $1/f$. (Explicit computation of optimal approximants for $f(z)=(1-z)^a$ can be found in \cite{JAM15,JentZeros19}.)

			As observed above, there appears to be a relationship between the value of $\alpha$ and the first $p^*_n$ in which negative coefficients appear; roughly that more Dirichlet-like spaces ($\alpha>0$) have negative coefficients appearing sooner and that more Bergman-like spaces ($\alpha<0$) have them occurring later, although we have not carefully examined this.
		\end{rmks}
We hope to further explore optimal approximants and orthogonal polynomials in several variables, including the examples in this section, in a systematic way in future work. 
		
\section*{Acknowledgments}
We thank Catherine B\'en\'eteau for some very helpful conversations, and especially for drawing our attention to several important papers including \cite{Chui80,Izu85}. We are grateful to Michael Hartz, Greg Knese, Stefan Richter, and Hugo Woerdeman for useful conversations and correspondence.

\bibliography{bidisk_OG_OA.bib} 
\bibliographystyle{acm}

\end{document}